\subjclass[2010]{14F42}
\keywords{Motivic homotopy theory, stable motivic homotopy sheaves}
 \definecolor{dark-red}{rgb}{0.4,0.15,0.15}
\newcommand{\Q}{\mathbb{Q}} 
\newcommand{\QQ}{\Q}
\newcommand{\CC}{\mathbb{C}} 
\renewcommand{\AA}{\mathbb{A}}
\newcommand{\ZZ}{\mathbb{Z}}
\newcommand{\R}{\mathbb{R}}
\newcommand{\RR}{\R}
\newcommand{\FF}{\mathbb{F}}
\newcommand{\kk}{\mathsf{k}}
\newcommand{\sphere}{\mathrm{S}}
\newcommand{\slice}{\mathscr{S}}
\newcommand{\ul}[1]{\underline{\smash{#1}}}
\DeclareMathOperator{\Ext}{Ext}
\renewcommand{\top}{{\mathrm{top}}}
\newcommand{\Sq}{\mathsf{Sq}}
\newcommand{\jw}{\mathsf{jw}}
\newcommand{\KGL}{\mathsf{KGL}}
\numberwithin{equation}{section} 
\theoremstyle{plain}
\newaliascnt{theorem}{equation}  
\newtheorem{theorem}[theorem]{Theorem}  
 \theoremstyle{definition}
\newaliascnt{prop}{equation}  
\newtheorem{prop}[prop]{Proposition}
\newaliascnt{lemma}{equation}  
\newtheorem{lemma}[lemma]{Lemma}
\newaliascnt{corollary}{equation}  
\newtheorem{corollary}[corollary]{Corollary}
\newaliascnt{claim}{equation}  
\newaliascnt{conjecture}{equation}  
\newtheorem{conjecture}[conjecture]{Conjecture}
\newaliascnt{question}{equation}  
\newaliascnt{defn}{equation}  
\newtheorem{defn}[defn]{Definition}
\newaliascnt{example}{equation}  
\theoremstyle{remark}
\newaliascnt{remark}{equation}  
\newtheorem{remark}[remark]{Remark}
\newaliascnt{convention}{equation}  
\theoremstyle{plain}
\newtheorem*{mainthm}{Main Theorem}
\newcommand{\aref}[1]{\autoref{#1}}
\renewcommand{\AA}{\mathbb{A}}
\newcommand{\SHA}{\mathrm{SH}^{\AA^1}\!}
\newcommand{\PP}{\mathbb{P}}
\newcommand{\GG}{\mathbb{G}}
\newcommand{\Spec}{\operatorname{Spec}}
\newcommand{\chr}{\operatorname{char}}
\newcommand{\cd}{\operatorname{cd}}
\renewcommand{\sc}{\operatorname{sc}}
\newcommand{\comp}[1]{^{\widehat{~}}_{#1}}
\newcommand{\kw}{\mathrm{kw}}
\newcommand{\KW}{\mathrm{KW}}
\newcommand{\KQ}{\mathrm{KQ}}
\newcommand{\Dd}{\mathrm{D}}
\begin{document}
\title{The homotopy groups of the $\eta$-periodic motivic sphere spectrum}

\author{Kyle Ormsby}
\address{Reed College}
\email{ormsbyk@reed.edu}

\author{Oliver R\"ondigs}
\address{Universit\"at Osnabr\"uck}
\email{oroendig@uni-osnabrueck.de}

\thanks{K.O.~was partially supported by NSF grant DMS-1709302; O.R.~was partially supported by the DFG SPP 1786.}

\begin{abstract}
We compute the homotopy groups of the $\eta$-periodic motivic sphere spectrum over a field $\kk$ of finite cohomological dimension with characteristic not $2$ and in which $-1$ a sum of four squares.  We also study the general characteristic $0$ case and show that the $\alpha_1$-periodic slice spectral sequence over $\QQ$ determines the $\alpha_1$-periodic slice spectral sequence over all extensions $\kk/\QQ$.  This leads to a speculation on the role of a ``connective Witt-theoretic $J$-spectrum'' in $\eta$-periodic motivic homotopy theory.
\end{abstract}

\maketitle

\section{Introduction}\label{sec:intro}

The motivic sphere spectrum $\sphere$ is the unit object in the tensor triangulated stable homotopy category of motivic spectra $(\SHA(\kk),\wedge)$ over a field $\kk$.  In this category, both the simplicial circle $S^1$ and the punctured affine line $\AA^1\smallsetminus 0$ are $\wedge$-invertible, so it is crucial that we understand the bigraded homotopy groups $\pi_\star\sphere := \bigoplus_{m,n\in \ZZ}\pi_{m+n\alpha}\sphere$ where $\pi_{m+n\alpha}\sphere := [(S^1)^{\wedge m}\wedge (\AA^1\smallsetminus 0)^{\wedge n},\sphere]_{\SHA(\kk)}$.  See the introduction to \cite{ORO:vanishing} for a more complete discussion of the importance of this ring.

The motivic Hopf map $\eta\in\pi_\alpha\sphere$ represented by the canonical $\GG_m$-torsor $\AA^2\smallsetminus 0\to \PP^1$ plays an especially important role in $\pi_\star\sphere$.  This class is non-nilpotent over all fields \cite{morel:pi0} and thus represents a first example of exotic behavior in $\pi_\star \sphere$, differentiating it from the classical stable stems.  (Recall that $\eta^4=0$ classically, and that the Nishida Nilpotence Theorem \cite{nishida} tells us that all classes of nonzero degree in the classical stable stems are nilpotent.)  Let
\[
  \eta^{-1}\sphere := \operatorname{hocolim}(\sphere\xrightarrow\eta \Sigma^{-\alpha}\sphere\xrightarrow\eta \Sigma^{-2\alpha}\sphere\xrightarrow\eta \Sigma^{-3\alpha}\sphere\xrightarrow\eta \cdots)
\]
denote the $\eta$-periodic sphere spectrum.\footnote{Other authors have referred to this object as the $\eta$-local or $\eta$-inverted sphere.  We have chosen our terminology to match the language of classical $v_n$-periodic homotopy theory, which seems appropriate given the emerging role of $\eta$ in motivic nilpotence and periodicity \cite{andrews:w,gheorghe:exotic}.}  We have $\pi_\star \eta^{-1}\sphere\cong \eta^{-1}\pi_\star\sphere$ (where the latter term represents the localization of the ring $\pi_\star\sphere$ at the multiplicative set $\{\eta,\eta^2,\ldots\}$), so inverting $\eta$ annihilates $\Gamma_\eta := \{x\in \pi_\star \sphere\mid x\eta^N=0\text{ for some }N\}$ and induces an injection $\pi_\star \sphere/\Gamma_\eta\to \pi_\star\eta^{-1}\sphere$.  

A number of authors have studied $\pi_\star\eta^{-1}\sphere$ over particular fields, including M.~Andrews and H.~Miller \cite{AM} over $\CC$, B.~Guillou and D.~Isaksen \cite{GI:etaC,GI:etaR} over $\CC$ and $\RR$, and G.~Wilson \cite{Wilson} over finite fields, local fields, and $\QQ$.  Over $\CC$, $\pi_{m+n\alpha}\sphere\cong W(\CC)\cong \ZZ/2$ for nonnegative $m$ congruent to $0$ or $3$ mod $4$, whereas more complicated ``image of $J$''-style patterns occur in $\pi_\star \eta^{-1}\sphere\comp{2}$ (the bigraded homotopy groups of the $\eta$-periodic $2$-complete sphere) over $\RR$ and $\QQ$.  These authors work with either the motivic Adams-Novikov or motivic Adams spectral sequence in order to produce their results.  In addition to these results, R\"ondigs \cite{roendigs:etainv} has shown that $\pi_1\eta^{-1}\sphere=\pi_2\eta^{-1}\sphere=0$ over all fields of characteristic different from $2$.

In this note, we use the $\alpha_1$-periodic slice spectral sequence to completely determine $\pi_\star \eta^{-1}\sphere$ over finite-cohomological dimension fields with characteristic different from $2$ in which $-1$ as a sum of four squares.\footnote{The smallest $r$ such that $-1$ is a sum of $r$ squares in $\kk$ is called the \emph{level} of $\kk$ and is often denoted $s(\kk)$. (The $s$ is for \emph{Stufe}).  By a theorem of Pfister \cite{pfister:level}, $s(\kk)$ is always a power of $2$.  See \cite[Examples XI.2.4]{lam:intro} for examples of fields of various levels.}  Let $W(\kk)$ denote the Witt ring of quadratic forms over $\kk$ modulo the hyperbolic plane.

\begin{mainthm}[see \aref{thm:main}]\label{thm:start}
Let $\kk$ be a field of finite cohomological dimension with characteristic not $2$.  If $-1$ is a sum of four squares in $\kk$, then
\[
  \pi_\star \eta^{-1}\sphere \cong W(\kk)[\eta^{\pm 1},\sigma,\mu]/(\sigma^2)
\]
where $|\sigma|=3+4\alpha$ and $|\mu| = 4+5\alpha$.  In particular, the bigraded homotopy groups of $\eta^{-1}\sphere$ are
\[
  \pi_{m+n\alpha}\eta^{-1}\sphere\cong
  \begin{cases}
    W(\kk)&\text{if }m\ge 0\text{ and }m\equiv 0\text{ or }3\pmod{4}\text{,}\\
    0&\text{otherwise.}
  \end{cases}
\]
\end{mainthm}

In \aref{cor:nonperiodic} (see also \cite[Theorem 5.5]{ORO:vanishing}), we see that for fields satisfying the same hypotheses, $\pi_{m+n\alpha}\sphere\cong \pi_{m+n\alpha}\eta^{-1}\sphere$ for $2n\ge \max\{3m+5,4m\}$, so we have also computed a bi-infinite range of homotopy groups of the motivic sphere spectrum.

The picture is less clear for fields of characteristic $0$ with cohomological dimension greater than $2$, but we are able to produce some partial results in \aref{sec:char0}.  Let $\alpha_1^{-1}\slice(\kk)$ denote the $\alpha_1$-periodic slice spectral sequence over $\kk$.  In \aref{thm:profile}, we show that $\alpha_1^{-1}\slice(\QQ)$ determines $\alpha_1^{-1}\slice(\kk)$ for any field extension $\kk/\QQ$.  This leads to a conjecture on the differentials in $\alpha_1^{-1}\slice(\kk)$ and some speculations regarding the structure of $\pi_\star \eta^{-1}\sphere$ in general.

\subsection*{Acknowledgments}
The first named author thanks Paul Arne {\O}stv{\ae}r, Glen Wilson, and participants at the conference \emph{Motivic Homotopy Groups of Spheres III} who gave particularly helpful feedback during an early stage of this project.  Both authors thank Tom Bachmann for a helpful discussion regarding \aref{conj:jw}.

The spectral sequence diagrams in this paper were produced with Hood Chatham's \LaTeX\ package \href{https://www.ctan.org/pkg/spectralsequences}{\texttt{spectralsequences}}.

\section{The $\alpha_1$-periodic slice spectral sequence}\label{sec:slice}

In this section, we set up the $\alpha_1$-periodic slice spectral sequence and discuss its convergence properties and first two pages over a general field of characteristic different from $2$.

We refer to \cite{RSO:pi1} for the setup of the slice spectral sequence, and only briefly recall some pertinent facts here.  The sphere spectrum $\sphere$ is effective and thus has a slice tower of the form
\[
\begin{tikzcd}
\cdots\rar &f_3\sphere\dar\rar &f_2\sphere\dar\rar &f_1\sphere\dar\rar &f_0\sphere=\sphere\dar\\
&s_3\sphere &s_2\sphere &s_1\sphere &s_0\sphere
\end{tikzcd}
\]
where $f_q\sphere$ is the $q$-th effective cover of $\sphere$ and $s_q\sphere$ is the $q$-th slice of $\sphere$.  Associated with this tower is the slice spectral sequence $\slice$ with $E_1$-page $\slice_1^{q,m+n\alpha} = \pi_{m+n\alpha}s_q\sphere$.  (If we need to refer to the base field, then we will denote this spectral sequence $\slice(\kk)$.)  The differentials take the form $d_r:\slice_r^{q,m+n\alpha}\to \slice_r^{q+r,m-1+n\alpha}$.

The following theorem states the basic convergence properties of $\slice$; it is a concatenation of \cite[Theorem 3.50]{RSO:pi1} and \cite[Theorem 1.3]{ORO:vanishing}. Recall that the $\eta$-complete sphere spectrum is $\sphere\comp{\eta} := \operatorname{holim}\sphere/\eta^n$ where $\sphere/\eta^n$ is the cofiber of $\eta^n:\Sigma^{n\alpha}\sphere\to \sphere$.

\begin{theorem}\label{thm:sliceCond}
The slice spectral sequence for $\sphere$ conditionally converges to $\pi_\star \sphere\comp{\eta}$.  Moreover, if $\cd k<\infty$, then $\sphere\comp{\eta}\simeq \sphere$ and the slice spectral sequence for $\sphere$ converges conditionally to $\pi_\star \sphere$.
\end{theorem}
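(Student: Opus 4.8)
The plan is to concatenate two results already established in the literature, \cite[Theorem~3.50]{RSO:pi1} and \cite[Theorem~1.3]{ORO:vanishing}; I record the mechanism behind each. For the first assertion: the slice spectral sequence is the spectral sequence of the (exhaustive) filtration of $\sphere$ by the effective covers $f_q\sphere$, so in Boardman's sense it conditionally converges to $\pi_\star$ of the slice completion $\sphere^{\mathrm{sc}}\defeq\holim_q\sphere/f_{q+1}\sphere$; this step is formal. The work is the identification $\sphere^{\mathrm{sc}}\wkeq\sphere\comp{\eta}$, for which I would use two inputs. First, each slice $s_q\sphere$ is a module over the zeroth slice $s_0\sphere\wkeq\EM\Z$, and $\eta$ acts by zero on $\EM\Z$-modules (it maps to $0$ under the Hurewicz map to $\pi_\star\EM\Z$, motivic cohomology of the point being concentrated in nonnegative weights); hence $\eta$ is nilpotent on each finite stage $\sphere/f_{q+1}\sphere$, each such stage is $\eta$-complete, and therefore so is $\sphere^{\mathrm{sc}}=\holim_q\sphere/f_{q+1}\sphere$, producing a factorization $\sphere\to\sphere\comp{\eta}\to\sphere^{\mathrm{sc}}$. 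Second, because $\eta$ dies on $s_0\sphere$ the map $\eta^q\colon\Sigma^{q\alpha}\sphere\to\sphere$ lifts through $f_q\sphere$, and the resulting map of towers $\{\Sigma^{q\alpha}\sphere\}_q\to\{f_q\sphere\}_q$ exhibits $\sphere\comp{\eta}=\holim_n\sphere/\eta^n$ as slice-complete and equivalent to $\sphere^{\mathrm{sc}}$; combining, $\sphere^{\mathrm{sc}}\wkeq\sphere\comp{\eta}$.

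For the second assertion I would combine the first with the statement $\sphere\comp{\eta}\wkeq\sphere$ under the hypothesis $\cd\kk<\infty$. Here the plan is to show that the fibre $F\defeq\hofib(\sphere\to\sphere\comp{\eta})\wkeq\holim\bigl(\cdots\xrightarrow{\eta}\Sigma^{\alpha}\sphere\xrightarrow{\eta}\sphere\bigr)$ has vanishing homotopy. Via the Milnor $\lim^1$ sequence this reduces, in each total degree, to pro-vanishing of the inverse system $\{\pi_{m+j\alpha}\sphere\}_{j\to-\infty}$ with bonding maps given by multiplication by $\eta$ (and the analogous statement on homotopy sheaves, over residue fields). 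For $m=0$ this follows from Morel's isomorphism $\pi_{j\alpha}\sphere\iso\kmw{-j}{\kk}$ together with the resolved Milnor conjecture on quadratic forms: a field of finite $2$-cohomological dimension has $I^n=0$ for $n\gg0$ (the associated graded of the $I$-adic filtration is mod-$2$ Galois cohomology, which vanishes above $\cd_2$, so $I^n=I^{n+1}$ for $n\gg0$ and then $I^n=0$ by the Arason--Pfister Hauptsatz), whence $\kmw{n}{\kk}\iso K^M_n(\kk)$ for $n\gg0$; since $\eta$ acts by zero on $K^M_*$, the bonding maps are eventually zero. For $m>0$ I would bootstrap through the (tautologically defined) slice filtration $F^q\pi_\star\sphere=\operatorname{im}(\pi_\star f_q\sphere\to\pi_\star\sphere)$: when $\cd\kk<\infty$ motivic cohomology of a field vanishes outside a band around the diagonal, so for fixed $m$ only slices $s_q\sphere$ with $q$ in a bounded range --- independent of the $\alpha$-weight --- contribute, the slice filtration on $\pi_{m+\bullet\alpha}\sphere$ is finite of uniform length, $\eta$ raises it by one step (as it is zero on the slice-graded), hence $\eta$ is nilpotent with uniform exponent and the tower is pro-zero.

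The principal obstacle is, in the first assertion, the genuine comparison of the slice tower of $\sphere$ with its $\eta$-adic tower --- making precise that the lifts of the $\eta^q$ assemble to the claimed tower equivalence --- and, in the second assertion, running the $m>0$ bootstrap uniformly over all relevant residue fields without circularly presupposing slice convergence. These are exactly the technical hearts of \cite[Theorem~3.50]{RSO:pi1} and \cite[Theorem~1.3]{ORO:vanishing}, which I would invoke rather than reprove.
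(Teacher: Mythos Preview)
Your proposal is correct and matches the paper's approach exactly: the paper does not prove this theorem at all but simply states it as ``a concatenation of \cite[Theorem 3.50]{RSO:pi1} and \cite[Theorem 1.3]{ORO:vanishing},'' which is precisely what you do. Your additional sketch of the mechanisms behind those cited results is accurate and goes beyond what the paper records.
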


We also have control over $\slice_1$ via the following slice computation.  We let $\Ext_{MU_*MU}^{s,t}$ denote the cohomology of the $MU$-Hopf algebroid in cohomological degree $s$ and internal degree $t$.  (Recall that this is the $E_2$-page of the Novikov, \emph{i.e.}, $MU$-Adams, spectral sequence from topology.)

\begin{theorem}[{\cite[Theorem 2.2]{RSO:pi1}}]\label{thm:slices}
The $q$-th slice of the motivic sphere spectrum is
\[
  s_q\sphere \simeq \bigvee_{s\ge 0} \Sigma^{q-s+q\alpha}H\Ext_{MU_*MU}^{s,2q}
\]
at least after inverting the exponential characteristic of the
base field.
\end{theorem}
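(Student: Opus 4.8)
The plan is to deduce the slices of $\sphere$ from those of algebraic cobordism $MGL$ and its smash powers, by means of the $MGL$-based Adams resolution of $\sphere$. Two external inputs are needed. First, the Hopkins--Morel--Hoyois theorem: after inverting the exponential characteristic of $\kk$, the unit induces an isomorphism $s_0\sphere\xrightarrow{\ \sim\ }s_0 MGL\simeq H\ZZ$, and more precisely $s_q MGL\simeq\Sigma^{q+q\alpha}H\big((MU_*)_{2q}\big)$, where $MU_*=\pi_*MU$ is the Lazard ring with its generators in degrees $2i$. Second, the R\"ondigs--\O stv\ae r equivalence between $H\ZZ$-modules in $\SHA(\kk)$ and Voevodsky's derived category of motives $DM(\kk)$, which puts $\ZZ$-linear homological algebra at one's disposal.

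The first step is to compute the slices of the smash powers. Since $MGL\wedge MGL$ is free as a left $MGL$-module on a polynomial basis $\{b^I\}$ in classes $b_i$ of degree $i(1+\alpha)$, and since $s_q(\Sigma^{a(1+\alpha)}X)\simeq\Sigma^{a(1+\alpha)}s_{q-a}X$, induction on $n$ yields
\[
 s_q\big(MGL^{\wedge(n+1)}\big)\ \simeq\ \Sigma^{q+q\alpha}\,H\big((MU_*MU^{\otimes_{MU_*}n})_{2q}\big),
\]
that is, the Tate twist $q$ of $H$ applied to the degree-$2q$ part of the $n$-fold term of the cobar complex of the Hopf algebroid $(MU_*,MU_*MU)$. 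No ``extra'' motivic contributions appear because everything is computed $MGL$-linearly and the slices of $MGL$ are already pinned down by the first input. One checks moreover that the cosimplicial structure maps of $MGL^{\wedge\bullet+1}$ induce on $q$-slices exactly the cobar differentials, which are weight-preserving and hence, under the equivalence above, correspond to honest maps of constant motives in $DM(\kk)$.

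The second step is to push $s_q$ through the totalization of $MGL^{\wedge\bullet+1}$. Set $\bar E:=\hofib(\sphere\to MGL)$. Exactness of the slice functors together with $s_0\sphere\xrightarrow{\sim}s_0 MGL$ forces $s_0\bar E=0$, whence $\bar E$ and all its smash powers $\bar E^{\wedge n}$ lie in $f_n\SHA(\kk)$, so $s_q(\bar E^{\wedge n})=0$ for $q<n$. The stages of the $MGL$-Adams tower of $\sphere$ coincide (up to a reindexing) with the partial totalizations $\tot_N(MGL^{\wedge\bullet+1})$, with fibers $\bar E^{\wedge N}$; hence for $q<N$ the map $s_q\sphere\to s_q\tot_N(MGL^{\wedge\bullet+1})$ is an isomorphism, and $s_q$, being exact, commutes with the finite limit $\tot_N$. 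Since the cobar complex is bounded in each internal degree, passing to the limit gives $s_q\sphere\simeq\tot\big(s_q MGL^{\wedge\bullet+1}\big)$, which by the first step is $\Sigma^{q+q\alpha}$ applied, in $DM(\kk)$, to the image of the cobar complex $C^\bullet_{2q}$ whose cohomology in degree $s$ is $\Ext^{s,2q}_{MU_*MU}$. As a bounded-below complex of free abelian groups, $C^\bullet_{2q}$ splits (noncanonically) as $\bigoplus_{s\ge0}\Ext^{s,2q}_{MU_*MU}[-s]$ in $DM(\kk)$; reinterpreting $[-s]$ as an $s$-fold simplicial desuspension gives $s_q\sphere\simeq\bigvee_{s\ge0}\Sigma^{q-s+q\alpha}H\Ext^{s,2q}_{MU_*MU}$, as asserted. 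For fixed $q$ only finitely many $s$ contribute, by the vanishing line in the topological Adams--Novikov $E_2$-page for the sphere, so the wedge and the totalization are effectively finite.

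The main obstacle is the first input: the Hopkins--Morel--Hoyois computation is a deep theorem, and it is precisely the origin of the exponential-characteristic hypothesis, since the argument in characteristic $p$ requires inverting $p$. Granting it, the remaining work is organizational --- verifying $\bar E^{\wedge n}\in f_n$, that the $MGL$-Adams tower is slice-wise eventually constant so that $s_q$ commutes with $\tot$, and that the slices of the smash powers detect only the $MU$-theoretic data (which follows from cellularity of the $MGL^{\wedge n}$ and flatness of $MU_*MU$ over $MU_*$).
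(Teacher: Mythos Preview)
The paper does not prove this statement; it simply records it as a citation of \cite[Theorem~2.2]{RSO:pi1}, so there is no ``paper's own proof'' to compare against beyond the reference itself.

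That said, your sketch is a faithful recapitulation of the argument in the cited reference and its antecedents (Voevodsky's unpublished notes, Levine, Spitzweck, and Hoyois). The two essential ingredients are exactly the ones you isolate: the Hopkins--Morel--Hoyois identification of $s_q MGL$ (which is the source of the exponential-characteristic hypothesis), and the fact that $\bar E=\hofib(\sphere\to MGL)$ lies in $f_1\SHA(\kk)$, so that $\bar E^{\wedge n}\in f_n\SHA(\kk)$ and the slice functor $s_q$ sees only a finite piece of the $MGL$-Adams tower. Your formality step---splitting the degree-$2q$ cobar complex in $\Dd(\ZZ)$ as $\bigoplus_s \Ext^{s,2q}_{MU_*MU}[-s]$---is valid because $\ZZ$ has global dimension~$1$, so every bounded-below complex of free abelian groups is quasi-isomorphic to its cohomology. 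One small point of care: the identification of the cosimplicial structure maps on slices with the cobar differentials, and the passage through the equivalence with $DM(\kk)$, require the multiplicative compatibility of slices with the $MGL$-module structure, which is worked out carefully in \cite{RSO:pi1}; you gesture at this but it is where most of the bookkeeping hides. Overall the proposal is correct and matches the literature.
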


We refer to \cite{ravenel} for basic facts about $\Ext_{MU_*MU}$, and we use its naming conventions for elements.  Importantly, there is a single nonzero class $\alpha_1\in \Ext_{MU_*MU}^{1,2}$ that represents $\eta$.  Multiplication by $\alpha_1$ induces a map of spectral sequences $\alpha_1:\slice^{q,m+n\alpha}\to\slice^{q+1,m+(n+1)\alpha}$.  Taking the colimit of the tower given by iterating this map produces the $\alpha_1$-periodic slice spectral sequence, $\alpha_1^{-1}\slice$.  We will analyze the target and convergence properties of $\alpha_1^{-1}\slice$ momentarily, but it certainly appears that this construction ought to say something about $\eta^{-1}\sphere$ or a related object.

There is another obvious spectral sequence we could consider, namely the slice spectral sequence for $\eta^{-1}\sphere$, but it turns out that the two spectral sequences are the same.  For a motivic spectrum $X$, recall from \cite[Definition 3.1]{RSO:pi1} that $\sc(X)$ is the \emph{slice completion} of $X$.

\begin{theorem}\label{thm:etaSlices}
The slice spectral sequence for $\eta^{-1}\sphere$ and $\alpha_1^{-1}\slice$ are isomorphic as spectral sequences.  They both have first page additively isomorphic to $\pi_\star H\FF_2[\alpha_1^{\pm 1},\alpha_3,\alpha_4]/(\alpha_4^2)$ and conditionally converge to $\pi_\star\sc(\eta^{-1}\sphere)$ in the sense of \cite{boardman}.  If $\cd\kk<\infty$, then $\sc(\eta^{-1}\sphere)\simeq \eta^{-1}\sphere$ and the isomorphic spectral sequences conditionally converge to $\pi_\star\eta^{-1}\sphere$.
\end{theorem}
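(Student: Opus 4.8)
The plan is to identify the slice tower of $\eta^{-1}\sphere$ with the $\alpha_1$-localized slice tower of $\sphere$, then read off the three listed conclusions (the identification of spectral sequences, the $E_1$-page, and the convergence statements) from that single structural fact. First I would observe that slices commute with filtered homotopy colimits: the functors $f_q$ and $s_q$ preserve homotopy colimits (they are left adjoints up to the relevant localizations, or more concretely one uses that the effective cover $f_q$ commutes with colimits since $\Sigma^\infty$-suspensions of smooth schemes are compact generators of the effective category). Since $\eta^{-1}\sphere = \operatorname{hocolim}(\sphere \xrightarrow{\eta}\Sigma^{-\alpha}\sphere\xrightarrow{\eta}\cdots)$, and $\eta$ is represented on slices by multiplication by $\alpha_1$ (via \aref{thm:slices} and the remark that $\alpha_1\in\Ext^{1,2}_{MU_*MU}$ represents $\eta$), we get $s_q(\eta^{-1}\sphere) \simeq \operatorname{hocolim}_N \Sigma^{-N\alpha} s_{q+N}\sphere$, with the maps in the colimit being $\alpha_1$-multiplication. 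Passing to homotopy groups, $\pi_{m+n\alpha} s_q(\eta^{-1}\sphere) = \colim_N \slice_1^{q+N, m+(n+N)\alpha} = (\alpha_1^{-1}\slice)_1^{q,m+n\alpha}$. Compatibility of this identification with the differentials is automatic because the slice tower of $\eta^{-1}\sphere$ is literally the homotopy colimit of the (shifted) slice towers appearing in the $\eta$-colimit, and $d_r$ is a natural transformation of the relevant exact couples; hence the two spectral sequences agree on every page, proving the first assertion.

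Next I would compute the $E_1$-page. By the previous paragraph, $(\alpha_1^{-1}\slice)_1^{\star} \cong \alpha_1^{-1}\bigl(\bigoplus_{q,s} \pi_\star H\Ext^{s,2q}_{MU_*MU}\bigr)$, so I need the $\alpha_1$-localization of $\Ext_{MU_*MU}$ as a module (in fact ring) over itself. This is the classical computation of Andrews–Miller: $\alpha_1^{-1}\Ext_{MU_*MU} \cong \FF_2[\alpha_1^{\pm 1},\alpha_3,\alpha_4]/(\alpha_4^2)$, where $\alpha_3\in\Ext^{1,8}$ and $\alpha_4\in\Ext^{1,10}$ (these are the relevant Greek-letter classes; the relation $\alpha_4^2=0$ is part of their theorem). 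Plugging in the bidegrees and using that each $H\FF_2$-summand contributes $\pi_\star H\FF_2$ additively gives the stated additive isomorphism $(\alpha_1^{-1}\slice)_1 \cong \pi_\star H\FF_2[\alpha_1^{\pm 1},\alpha_3,\alpha_4]/(\alpha_4^2)$. I would cite \cite{AM} for the algebraic localization and \aref{thm:slices} for the translation into slices.

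For convergence, I would argue as follows. Conditional convergence of a slice spectral sequence to $\pi_\star\sc(X)$ (slice completion) holds quite generally — it is essentially the definition of $\sc$ as the homotopy limit of the slice tower, combined with Boardman's criterion, exactly as in \aref{thm:sliceCond} for $\sphere$ itself; applying this with $X = \eta^{-1}\sphere$ gives conditional convergence of the slice spectral sequence for $\eta^{-1}\sphere$ to $\pi_\star\sc(\eta^{-1}\sphere)$, and via the first assertion the same holds for $\alpha_1^{-1}\slice$. For the finite-cohomological-dimension refinement, I would show $\sc(\eta^{-1}\sphere)\simeq\eta^{-1}\sphere$. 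The key input is that $\sphere\comp{\eta}\simeq\sphere$ when $\cd\kk<\infty$ (\aref{thm:sliceCond}); since inverting $\eta$ kills $\eta$-completion obstructions, one gets that $\eta^{-1}\sphere$ is already slice-complete. Concretely: slice completion commutes with the filtered colimit defining $\eta^{-1}\sphere$ up to the lim$^1$ term, which vanishes because the tower of slice-truncations of each $\Sigma^{-N\alpha}\sphere$ is, after the identification with $\sphere\comp{\eta}$, pro-constant in the relevant range; alternatively one invokes that $\eta$-inversion and slice completion are both smashing-type localizations that commute here. The main obstacle is precisely this last point — ensuring that slice completion and $\eta$-inversion commute, i.e.\ controlling the $\lim^1$-term and the interchange of $\operatorname{holim}$ (slice tower) with $\operatorname{hocolim}$ ($\eta$-tower). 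I expect this to follow from the finite-cohomological-dimension hypothesis forcing the slice tower to be "eventually constant" in each bidegree after $\eta$-inversion (this is the content behind \aref{thm:sliceCond}'s second sentence), so that the interchange is harmless; I would carry this out carefully by reducing to the $\sphere\comp{\eta}\simeq\sphere$ statement already available.
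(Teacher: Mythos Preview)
Your approach is correct and close in spirit to the paper's, but the two arguments are organized differently in a way worth noting. For the identification of spectral sequences, you argue directly that $f_q$ and $s_q$ commute with the filtered homotopy colimit defining $\eta^{-1}\sphere$, so the slice tower of $\eta^{-1}\sphere$ \emph{is} the $\alpha_1$-localized slice tower of $\sphere$. The paper instead quotes an existing computation of $s_*\eta^{-1}\sphere$ (from \cite{RSO:pi1}), observes that the canonical map $\slice\to E$ (where $E$ is the slice spectral sequence of $\eta^{-1}\sphere$) sends $\alpha_1$ to a unit and hence factors through $\alpha_1^{-1}\slice$, and then checks that the induced map is an isomorphism on $E_1$-pages using Andrews--Miller. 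Both routes are fine; yours is more self-contained, while the paper's avoids re-proving that slices commute with this particular colimit. (A small correction: $\alpha_3\in\Ext^{1,6}_{MU_*MU}$ and $\alpha_4\in\Ext^{1,8}_{MU_*MU}$, not $\Ext^{1,8}$ and $\Ext^{1,10}$.)

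The place where your outline is genuinely soft is the final convergence step, which you correctly flag as the main obstacle. Your suggestions (vanishing $\lim^1$, ``smashing-type localizations'', pro-constant towers) are not quite the mechanism. The paper's argument is sharper: the finite-cohomological-dimension hypothesis produces a vanishing line in the slice $E_1$-page that is \emph{parallel to $\alpha_1$-multiplication}. Consequently, in each fixed bidegree the inverse limit defining slice completion is actually a \emph{finite} limit, and finite limits commute with the sequential colimit inverting $\alpha_1$. Your phrase ``eventually constant in each bidegree after $\eta$-inversion'' is pointing at exactly this, but the proof wants the specific statement about the vanishing line being parallel to $\alpha_1$; without it, neither the $\lim^1$ heuristic nor an appeal to smashing localizations (slice completion is not smashing) closes the gap.
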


\begin{remark}
In \aref{cor:fdconv} and \aref{sec:comp}, we will see that convergence is in fact strong if $\kk$ has odd characteristic or $\cd_2\kk<\infty$.  In \aref{sec:char0}, we will see that convergence over characteristic $0$ fields to $\pi_\star \sc(\eta^{-1}\sphere)$ is strong.
\end{remark}

\begin{remark}\label{rmk:mult}
As a ring object, $s_*\eta^{-1}\sphere$ is not an $H\FF_2$-algebra \cite[Remark 2.33]{RSO:pi1}, and our identification of $\alpha_1^{-1}\slice_1$ in \aref{thm:etaSlices} is not multiplicative.  By a bi-degree argument and the general properties of slice multiplicativity given in \cite[Section 2.4]{RSO:pi1}, the multiplication on $\alpha_1^{-1}\slice_1$ agrees with the ``naive'' multiplication up to addition of some terms involving $\Sq^1$.  Our determination of $\alpha_1^{-1}\slice_2$ does not depend on the precise multiplicative structure, and we will see in \aref{thm:d1} that the multiplication on $\alpha_1^{-1}\slice_2$ is fairly simple.
\end{remark}

\begin{proof}[Proof of \aref{thm:etaSlices}]
Let $E$ denote the slice spectral sequence for $\eta^{-1}\sphere$.  Then
\[
  E_1\cong \pi_\star H\FF_2[\alpha_1^{\pm 1},\alpha_3,\alpha_4]/(\alpha_4^2)
\]
by \cite[Theorem 2.35]{RSO:pi1}.\footnote{Although \aref{thm:slices} holds
after inverting the exponential characteristic, the slices $s_*\eta^{-1}\sphere$
are known without inverting the exponential characteristic. The reason is that
if $k$ is a field of odd characteristic $p$, then multiplication with $p$
is an isomorphism on the Witt ring of $k$, and hence on $\eta^{-1}\sphere$.} In particular, the canonical map $\slice\to E$ takes $\alpha_1$ to a unit and hence induces a map $\alpha_1^{-1}\slice\to E$.  By \cite[Corollary 6.2.3]{AM}, $\alpha_1^{-1}\Ext_{MU_*MU}\cong \FF_2[\alpha_1^{\pm 1},\alpha_3,\alpha_4]/(\alpha_4^2)$.  Given this result and the form of $\slice_1$ in \aref{thm:slices}, we conclude that $\alpha_1^{-1}\slice_1\to E_1$ is an isomorphism, and it follows that $\alpha_1^{-1}\slice\cong E$.

Our first convergence statement is formal given the construction of the slice spectral sequence (see \cite[\S 3.1]{RSO:pi1}).  For the second convergence statement, \aref{thm:slices} tells us that $\sc(\sphere)\simeq\sphere$ when $\cd \kk<\infty$.  Given the conditional convergence conditions of \cite[Definition 5.10]{boardman}, our result follows as long as the sequential colimit that inverts $\alpha_1$ commutes with the limit defining slice completion.  Our assumption on cohomological dimension implies a vanishing line parallel to $\alpha_1$-multiplication, and hence the limit in question is finite and commutes with sequential colimits.
\end{proof}

This leads us to the main theorem of this section, a determination of the first slice differentials and $\alpha_1^{-1}\slice_2$:

\begin{theorem}\label{thm:d1}
Over any field $\kk$ of characteristic different from $2$, the first slice differential for $\eta^{-1}\sphere$ has the form
\[
\begin{pmatrix}
  \Sq^2      &0 &\tau           &0     &0    &0 &0 &\cdots \\
  0          &0 &0              &0     &0    &0 &0 &\cdots\\
  \Sq^3\Sq^1 &0 &\Sq^2+\rho\Sq^1 &0     &0    &0 &0 &\cdots\\
  0          &0 &0              &\Sq^2     &0    &\tau &0 &\cdots\\
  0          &0 &\Sq^3\Sq^1     &\Sq^2\Sq^1+\Sq^3 &\Sq^2 &\rho &\tau &\cdots\\
  0          &0 &0             &\Sq^3\Sq^1 &0 &\Sq^2+\rho\Sq^1 &0 &\cdots\\
  0          &0 &0     &0  &\Sq^3\Sq^1&\Sq^2\Sq^1+\Sq^3&\Sq^2+\rho\Sq^1&\cdots\\
  0&0&0&0&0&\Sq^3\Sq^1&0&\cdots\\
  0&0&0&0&0&0&\Sq^3\Sq^1&\cdots\\
  \vdots &\vdots &\vdots &\vdots &\vdots &\vdots &\vdots &\ddots
\end{pmatrix}.
\]
Here the $i$-th column, $i\ge 0$, gives the first slice differential restricted to the summand $\Sigma^{i+q\alpha}H\FF_2$ of $s_q\eta^{-1}\sphere$ (the summand is $0$ if $i=1$).  The $j$-th row, $j\ge 0$, describes the incoming first slice differential for the summand $\Sigma^{j+1+(q+1)\alpha}H\FF_2$ of $\Sigma s_{q+1}\eta^{-1}\sphere$ (the summand is $0$ if $j=1$).

This results in an isomorphism of $k^M_*$-algebras
\[
  \alpha_1^{-1}\slice_2 \cong k^M_*[\alpha_1^{\pm 1},\alpha_4,\alpha_5]/(\alpha_4^2)
\]
where $|\alpha_4| = (4,3+4\alpha)$ and $\alpha_5 = \alpha_3^2\alpha_1^{-1}$ has degree $(5,4+5\alpha)$.
\end{theorem}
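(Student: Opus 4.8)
The plan is to realize the first slice differential for $\eta^{-1}\sphere$ as a matrix of stable mod-$2$ motivic cohomology operations over $\kk$, pin down its entries using bidegree restrictions, $\alpha_1$-periodicity, base change, and the identity $d_1\circ d_1=0$, and then to compute its homology. By \aref{thm:slices} and \aref{thm:etaSlices}, each slice $s_q\eta^{-1}\sphere$ is a wedge of copies of $\Sigma^{i+q\alpha}H\FF_2$, one summand for each integer $i\ge 0$ with $i\ne 1$, where the even summand $i=2b$ carries the $\alpha_1$-periodic family of $\alpha_3^{b}$ and the odd summand $i=2b+3$ carries that of $\alpha_3^{b}\alpha_4$. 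Hence $d_1\colon s_q\eta^{-1}\sphere\to\Sigma s_{q+1}\eta^{-1}\sphere$ is a matrix whose $(j,i)$-entry is a stable operation $\phi_{ji}\in[\Sigma^{i+q\alpha}H\FF_2,\Sigma^{j+1+(q+1)\alpha}H\FF_2]$, i.e.\ an element of the weight-$1$ summand of the mod-$2$ motivic Steenrod algebra over $\kk$ in cohomological degree $j-i+2$. The first step is to compute these coefficient groups: they vanish unless $-2\le j-i\le 2$, and in those five bands they are spanned respectively by $\tau$; by $\rho$ and $\tau\Sq^1$; by $\Sq^2$ and $\rho\Sq^1$; by $\Sq^3$ and $\Sq^2\Sq^1$; and by $\Sq^3\Sq^1$. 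Since multiplication by $\alpha_1$ is invertible, commutes with $d_1$, and carries the $i$-summand of $s_q$ isomorphically onto the $i$-summand of $s_{q+1}$, the entry $\phi_{ji}$ is independent of $q$; combined with the observation that a nonzero entry of a given type requires the corresponding target summand to be present and is further constrained by the Leibniz rule (for the twisted multiplication of \aref{rmk:mult}) applied to powers of $\alpha_3$, to $\alpha_4$ and to the coefficients $\tau,k^M_*$, this reduces the determination of the whole matrix to a finite computation.

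I would then fix the entries in two stages. The part of each $\phi_{ji}$ not divisible by $\rho$ is detected after base change along $\kk\hookrightarrow\kk^{\mathrm{sep}}$ (or to $\CC$ when $\ch\kk=0$), where $\rho=0$; there the slices and first differential are the complexifications of those over the prime field, so the $\tau$- and $\Sq^i$-parts are determined by \aref{thm:slices} together with the first slice differentials computed in \cite{RSO:pi1} and the $\alpha_1$-inverted Adams--Novikov computation of \cite{AM}. This produces the $\tau$, $\Sq^2$, $\Sq^3$, $\Sq^2\Sq^1$, $\Sq^3\Sq^1$ and $\Sq^2\Sq^1+\Sq^3$ entries of the displayed matrix, and shows the $\tau\Sq^1$-component of every $j=i-1$ entry vanishes. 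The only remaining ambiguity is an undetermined $\rho\Sq^1$ on each diagonal entry and an undetermined $\rho$ on each $j=i-1$ entry, since $\rho$-divisible operations of the other weights do not exist. I would resolve these by imposing $d_1\circ d_1=0$ together with the motivic Adem relations ($\Sq^1\Sq^1=0$, $\Sq^3=\Sq^1\Sq^2$, $\Sq^2\Sq^2=\tau\Sq^3\Sq^1$, $\Sq^1\tau=\rho$, and the relation for $\Sq^2\Sq^3$), forming the composite of two consecutive columns in each residue class and solving for the corrections; equivalently, one computes the $\Sq^1$-twist in the multiplication of \aref{rmk:mult} and reads the $\rho$-terms off the failure of the Leibniz rule on $\alpha_3^2$, $\alpha_3\alpha_4$, and so on. The outcome is that the diagonal correction is $\rho\Sq^1$ in all but the boundary columns $i=0,3,4$, and the $j=i-1$ correction is $\rho$ exactly in the columns carrying $\alpha_3^{b}\alpha_4$ with $b\ge 1$. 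I expect this last step to be the main obstacle: it is the one point where the arithmetic of $\kk$ intervenes beyond what is visible over $\kk^{\mathrm{sep}}$, and the $\rho$-bookkeeping is delicate because the pattern, though $\alpha_1$-periodic, is inhomogeneous across columns. (Alternatively one may quote the general-field first slice differential underlying \cite{RSO:pi1,roendigs:etainv} and simply invert $\alpha_1$.)

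For the last assertion I compute $\alpha_1^{-1}\slice_2=H(\alpha_1^{-1}\slice_1,d_1)$, where $\alpha_1^{-1}\slice_1=k^M_*[\tau][\alpha_1^{\pm 1},\alpha_3,\alpha_4]/(\alpha_4^2)$. The differential is $\alpha_1^{\pm 1}$- and $k^M_*$-linear — $\alpha_1$ and Milnor K-theory classes are permanent cycles and $\Sq^1$ annihilates $k^M_*$ — it obeys the Leibniz rule up to $\Sq^1$-corrections, and from the matrix $d_1\alpha_1=0$, $d_1\alpha_4=0$ on the bottom class, and $d_1\alpha_3=\tau\alpha_1^{4}+(\text{terms of higher Milnor weight})$, so in particular $\tau$ is a $d_1$-boundary. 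Running the $\tau$-Bockstein spectral sequence (filter $\alpha_1^{-1}\slice_1$ by powers of $\tau$, equivalently by the simplicial index $i$), the associated differential sends $\alpha_3$ to $\alpha_1^{4}$ and is trivial on $\alpha_1^{\pm1}$, $\alpha_4$ and $k^M_*$; its homology is $k^M_*[\alpha_1^{\pm1},\alpha_4,\alpha_3^{2}]/(\alpha_4^2)$, since $\tau$ and every odd power of $\alpha_3$ are boundaries while every even power of $\alpha_3$ survives and $\alpha_4^2=0$ persists. A bidegree count shows the spectral sequence then degenerates, so $\alpha_1^{-1}\slice_2\cong k^M_*[\alpha_1^{\pm1},\alpha_4,\alpha_5]/(\alpha_4^2)$ with $\alpha_5:=\alpha_3^{2}\alpha_1^{-1}$; the bidegrees $|\alpha_4|=(4,3+4\alpha)$ and $|\alpha_5|=(5,4+5\alpha)$ are immediate from those of $\alpha_3,\alpha_4,\alpha_1$, and comparison with the complex-motivic case (hence with \cite{AM}) confirms that there are no hidden relations and that the $\Sq^1$-twist of \aref{rmk:mult} is too sparse to affect the ring structure in these bidegrees.
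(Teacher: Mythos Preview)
Your overall shape is reasonable---parametrize $d_1$ by weight-one Steenrod operations, exploit $\alpha_1$-periodicity, reduce to a finite list of unknowns, and then compute homology---and your $E_2$ computation via the $\tau$-filtration is close in spirit to what the paper cites from \cite{RO:hermitian}.  However, the paper's determination of the matrix entries proceeds by a genuinely different route that you do not touch, and your substitute for it has a gap.

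The paper does \emph{not} pin down the non-$\rho$ entries by base change to a separably closed field together with the differentials recorded in \cite{RSO:pi1} and the Andrews--Miller answer.  Instead it uses two comparison maps with connective Witt $K$-theory: the unit $\eta^{-1}\sphere\to\kw$ on slices (\aref{thm:kw_slice}) constrains roughly half of the coefficients, and then a new map $\sigma_\infty\colon\Sigma^3\kw_\CC\to\eta^{-1}\sphere_\CC$, constructed in \aref{thm:sigma_infty} from the cell presentation of $\kw$ and the Andrews--Miller computation, handles the ``odd'' summands.  Only after these two inputs does the paper invoke $d_1\circ d_1=0$, and even then a residual ambiguity remains (the coefficients $c_{4n+2}=d_{4n+2}$ of $\Sq^2\Sq^1$ and $\Sq^3$ on the $(4n+2)$-summand), which is resolved by an explicit unlocalized computation via \cite[Lemma~4.1]{RSO:pi1}.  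Your proposal skips the entire $\kw$ comparison and asserts that \cite{RSO:pi1} already supplies the first slice differential over $\CC$ in the needed range; it does not---that reference computes only what is required for $\pi_1$, and the remark following \aref{thm:d1} indicates the present determination \emph{complements} what was known there.  So the step ``the $\tau$- and $\Sq^i$-parts are determined by the first slice differentials computed in \cite{RSO:pi1}'' is where your argument would fail: you would be left with undetermined $\Sq^2\Sq^1$/$\Sq^3$ coefficients that neither base change to $\kk^{\mathrm{sep}}$ nor $d_1^2=0$ alone resolves.  Similarly, your claim that $d_1^2=0$ plus Adem relations fixes all the $\rho$-corrections is optimistic; the paper's proof shows that $d_1^2=0$ yields only $c_{4n+2}=d_{4n+2}$ on one family, with the actual value coming from elsewhere.

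In short: the missing idea is the $\kw$-module structure on $\eta^{-1}\sphere$ (via the unit and the map $\sigma_\infty$), which is what makes the matrix computable.  Your $E_2$ argument is fine once the matrix is known.
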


\begin{remark}
The factor of $\alpha_1^{-1}$ in the definition of $\alpha_5$ is not strictly necessary, but is there so that $\bar{\alpha}_5\mapsto \alpha_5$ under the localization map $\slice\to\alpha_1^{-1}\slice$.
\end{remark}

\begin{remark}
The determination
of the first slice differential for $\eta^{-1}\sphere$ complements the
occurrences of multiplications with $\tau$, which were used on p.11
of [17] to deduce vanishing columns in the in the Andrews-Miller range 
of the unlocalized slice spectral sequence.
\end{remark}

The pattern of differentials indicated by \aref{thm:d1} is represented graphically in \aref{fig:E1}.  The form of $\alpha_1^{-1}\slice_2$ also implies an important convergence result, which we state presently.

\begin{figure}
\begin{center}
\includegraphics[width=5in]{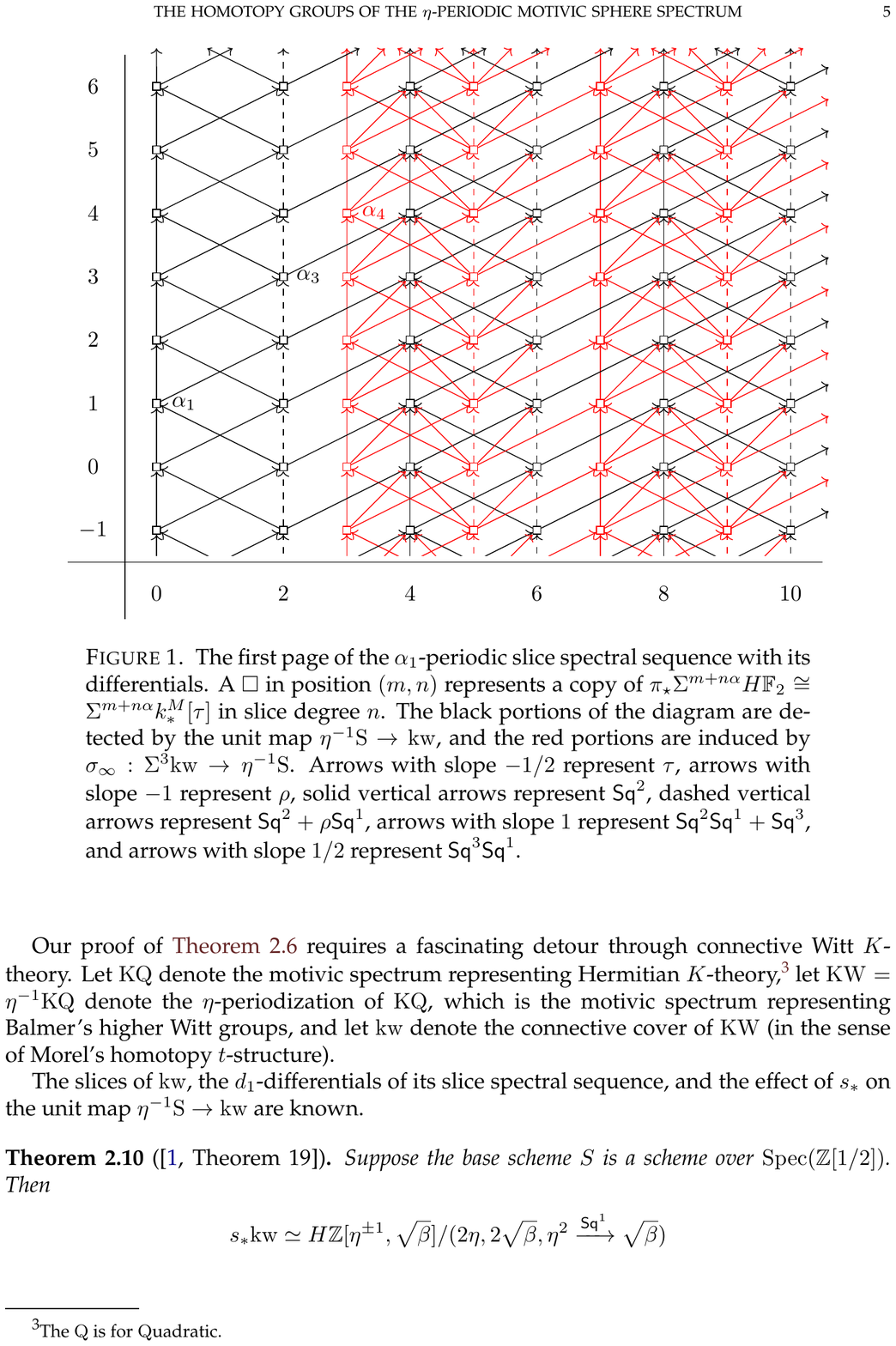}
\end{center}
\caption{The first page of the $\alpha_1$-periodic slice spectral sequence with its differentials.  A $\Box$ in position $(m,n)$ represents a copy of $\pi_\star \Sigma^{m+n\alpha}H\FF_2\cong \Sigma^{m+n\alpha}k^M_*[\tau]$ in slice degree $n$. The black portions of the diagram are detected by the unit map $\eta^{-1}\sphere\to \kw$, and the red portions are induced by $\sigma_\infty:\Sigma^3\kw\to \eta^{-1}\sphere$.  Arrows with slope $-1/2$ represent $\tau$, arrows with slope $-1$ represent $\rho$, solid vertical arrows represent $\Sq^2$, dashed vertical arrows represent $\Sq^2+\rho\Sq^1$, arrows with slope $1$ represent $\Sq^2\Sq^1+\Sq^3$, and arrows with slope $1/2$ represent $\Sq^3\Sq^1$.}\label{fig:E1}
\end{figure}

\begin{corollary}\label{cor:fdconv}
If $\cd_2(\kk)=r<\infty$, then $\alpha_1^{-1}\slice(\kk)$ collapses at its $(r+1)$-th page and converges strongly to $\pi_\star(\eta^{-1}\sphere)$.
\end{corollary}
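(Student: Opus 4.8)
The plan is to extract strong convergence from the combination of conditional convergence (Theorem \ref{thm:etaSlices}) and a vanishing line on $\alpha_1^{-1}\slice_2$ that forces all higher differentials and the derived $E_\infty$-obstruction groups to vanish. First I would recall from Theorem \ref{thm:etaSlices} that when $\cd\kk<\infty$ the spectral sequence $\alpha_1^{-1}\slice(\kk)$ conditionally converges to $\pi_\star\eta^{-1}\sphere$ in the sense of \cite{boardman}, so by \cite[Theorem 7.1 and Theorem 8.2]{boardman} it suffices to show that the spectral sequence is \emph{eventually constant} in each bidegree, i.e.\ that for each $(q, m+n\alpha)$ the groups $\slice_r^{q,m+n\alpha}$ stabilize for $r$ large, and moreover that the groups $RE_\infty = \lim^1_r \slice_r$ vanish. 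Both of these follow once one knows that the differentials entering and leaving a fixed bidegree vanish past a fixed page independent of the bidegree.

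The key step is the vanishing line. By Theorem \ref{thm:d1}, $\alpha_1^{-1}\slice_2 \cong k^M_*[\alpha_1^{\pm 1},\alpha_4,\alpha_5]/(\alpha_4^2)$. The coefficient ring $k^M_*$ of Milnor K-theory is concentrated in degrees $0,1,\dots,r$ when $\cd_2\kk = r$ (using that $k^M_j = K^M_j/2$ and the Milnor conjecture / Voevodsky's theorem identifying $k^M_*$ with mod-$2$ Galois cohomology, which vanishes above degree $r$). Thus in the filtration-vs-topological-degree plane, $\alpha_1^{-1}\slice_2$ is supported in a band of height $r$: writing a class as $\kappa\cdot\alpha_1^{a}\alpha_4^{\epsilon}\alpha_5^{b}$ with $\kappa\in k^M_j$, $0\le j\le r$, the relation between slice degree $q$ and simplicial degree $m$ is pinned down by the bidegrees $|\alpha_1|=(1,\alpha)$, $|\alpha_4|=(4,3+4\alpha)$, $|\alpha_5|=(5,4+5\alpha)$, and $|\kappa| = (j, j)$ forces $m - (\text{something linear in }q)$ to lie in $\{0,1,\dots,r\}$. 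A $d_s$-differential, $s\ge 2$, drops $j$ by $s-1$ (it lowers $m$ by $1$ while the permanent-cycle part $\alpha_1^a\alpha_4^\epsilon\alpha_5^b$ contributes no change to the $j$-coordinate after accounting for the shift in $q$), so for $s \ge r+1$ there is no room: the source and target cannot both be nonzero. Hence $\alpha_1^{-1}\slice_{r+1} = \alpha_1^{-1}\slice_\infty$ and the spectral sequence collapses at the $(r+1)$-th page.

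Given collapse at a finite page, strong convergence is automatic from conditional convergence: in each bidegree the tower of groups $\slice_r^{q,m+n\alpha}$ is eventually constant (for $r\ge r+1$), so $RE_\infty$-terms vanish and the exhaustive, Hausdorff, complete filtration hypotheses of \cite{boardman} are satisfied, yielding $\pi_{m+n\alpha}\eta^{-1}\sphere \cong \bigoplus_q \slice_{r+1}^{q,m+n\alpha}$ as the associated graded of a finite filtration (finite because the band has height $r$, so only finitely many slice degrees $q$ contribute to a fixed $m+n\alpha$). I expect the main obstacle to be bookkeeping: carefully translating the algebraic statement ``$k^M_*$ vanishes above degree $r$'' into the correct vanishing line in the $(q,m)$-plane given the three generators' bidegrees, and checking that a $d_s$ with $s\ge r+1$ genuinely has no possible nonzero source-target pair — this requires writing out the degree of a general monomial $\kappa\,\alpha_1^a\alpha_4^\epsilon\alpha_5^b$ and verifying the slope of the vanishing line is exactly parallel to (or steeper than is needed to block) the $d_s$ direction $(q,m)\mapsto(q+s, m-1)$.
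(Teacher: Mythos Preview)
Your approach is exactly the paper's: use $\alpha_1^{-1}\slice_2 \cong k^M_*[\alpha_1^{\pm 1},\alpha_4,\alpha_5]/(\alpha_4^2)$ from \aref{thm:d1} together with $k^M_{>r}(\kk)=0$ to force $d_{>r}=0$, then upgrade the conditional convergence of \aref{thm:etaSlices} to strong convergence via Boardman's criterion.

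One correction to the bookkeeping you flagged as delicate. In the convention $(q,m+n\alpha)$, the generators have $|\alpha_1|=(1,\alpha)$, $|\alpha_4|=(4,3+4\alpha)$, $|\alpha_5|=(5,4+5\alpha)$, while $\kappa\in k^M_j$ has $|\kappa|=(0,-j\alpha)$, not $(j,j)$. For a monomial $\kappa\,\alpha_1^{a}\alpha_4^{\epsilon}\alpha_5^{b}$ one finds $q=a+4\epsilon+5b$, $m=3\epsilon+4b$, $n=a+4\epsilon+5b-j$, so the invariant is $j=q-n$: in fixed weight $n$ the $k^M_*$-tower sits at slice degrees $n,n+1,\dots,n+r$. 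A $d_s$-differential takes $(q,m+n\alpha)$ to $(q+s,(m-1)+n\alpha)$, hence sends $k^M_j$ to $k^M_{j+s}$ --- it \emph{raises} the Milnor degree by $s$, rather than dropping it by $s-1$ as you wrote. The conclusion is unaffected (for $s>r$ the target lands in $k^M_{>r}=0$), but the direction of the shift is what actually makes the argument work, and the band is in $q$ relative to $n$, not in $m$ relative to $q$.
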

\begin{proof}
The form of $\alpha_1^{-1}\slice_2$ (which is presented graphically in \aref{fig:E2}) and the fact that $k^M_{>r}(\kk)=0$ imply that $d_{>r}=0$.  This collapse along with the conditional convergence of \aref{thm:sliceCond} imply the strong convergence portion of the corollary.
\end{proof}

Our proof of \aref{thm:d1} requires a fascinating detour through connective Witt $K$-theory.  Let $\KQ$ denote the motivic spectrum representing Hermitian $K$-theory,\footnote{The $\mathrm{Q}$ is for $\mathrm{Q}$uadratic.} let $\KW = \eta^{-1}\KQ$ denote the $\eta$-periodization of $\KQ$, which is the motivic spectrum representing Balmer's higher Witt groups, and let $\kw$ denote the connective cover of $\KW$ (in the sense of Morel's homotopy $t$-structure).

The slices of $\kw$, the $d_1$-differentials of its slice spectral sequence, and the effect of $s_*$ on the unit map $\eta^{-1}\sphere\to \kw$ are known.

\begin{theorem}[{\cite[Theorem 19]{ARO:veryEffective}}]\label{thm:kw_slice}
Suppose the base scheme $S$ is a scheme over $\Spec(\ZZ[1/2])$.  Then
\[
  s_*\kw \simeq H\ZZ[\eta^{\pm 1},\sqrt{\beta}]/(2\eta,2\sqrt{\beta},\eta^2\xrightarrow{\Sq^1}\sqrt{\beta})
\]
where $|\eta| = \alpha$, $|\sqrt{\beta}| = 2+2\alpha$, and the first slice differential takes the form
\[
\begin{pmatrix}
  \Sq^2 &0 &\tau &0 &0 &0 &0 &\cdots \\
  0 &0 &0 &0 &0 &0 &0 &\cdots\\
  \Sq^3\Sq^1 &0 &\Sq^2+\rho\Sq^1 &0 &0 &0 &0 &\cdots\\
  0 &0 &0 &0 &0 &0 &0 &\cdots\\
  0 &0 &\Sq^3\Sq^1 &0 &\Sq^2 &0 &\tau &\cdots\\
  0 &0 &0 &0 &0 &0 &0 &\cdots\\
  0&0&0&0&\Sq^3\Sq^1&0&\Sq^2+\rho\Sq^1&\cdots\\
  0&0&0&0&0&0&0&\cdots\\
  0&0&0&0&0&0&\Sq^3\Sq^1&\cdots\\
  \vdots &\vdots &\vdots &\vdots &\vdots &\vdots &\vdots &\ddots
\end{pmatrix}
\]
(with the same conventions as \aref{thm:d1}).  Moreover, there is a splitting of $s_0\eta^{-1}\sphere$ such that the unit $\eta^{-1}\sphere\to \kw$ induces an inclusion on every even summand, and $\Sq^1$ on every odd summand.
\end{theorem}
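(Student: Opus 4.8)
The plan is to obtain the slice description, the first differential, and the effect of the unit map on slices all at once, by comparison with (connective) Hermitian $K$-theory and inversion of $\eta$. The first step is to reduce everything to slice $0$. Since $\eta$ is invertible on $\kw$, the map $\eta\colon\kw\to\Sigma^{-\alpha}\kw$ is an equivalence; applying $s_q$ and using $s_q\Sigma^{-\alpha}(-)\simeq\Sigma^{-\alpha}s_{q+1}(-)$ (a consequence of $f_{q+1}\Sigma^{\alpha}\simeq\Sigma^{\alpha}f_q$) gives equivalences $s_q\kw\simeq\Sigma^{\alpha}s_{q-1}\kw$, hence $s_q\kw\simeq\Sigma^{q\alpha}s_0\kw$ for every $q\in\ZZ$. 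By naturality of the slice tower under multiplication by $\eta$, the entire matrix of first differentials and the effect of the unit on slices are then the $\Sigma^{q\alpha}$-translates of their restrictions to slice $0$, so it suffices to identify $s_0\kw$, the single map $d_1\colon s_0\kw\to\Sigma^{1+\alpha}s_0\kw$, and the map $s_0(\eta^{-1}\sphere\to\kw)$.

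To compute $s_0\kw$, write $\mathbf{kq}$ for the connective cover of $\KQ$, so that $\kw\simeq\eta^{-1}\mathbf{kq}$. The functors $f_q$, and hence $s_q$, preserve filtered colimits (effective spectra of slice $\ge q$ form a compactly generated subcategory), so together with $s_q\Sigma^{-\alpha}\simeq\Sigma^{-\alpha}s_{q+1}$ one obtains
\[
  s_0\kw\;\simeq\;\colim\bigl(s_0\mathbf{kq}\xrightarrow{\eta}\Sigma^{-\alpha}s_1\mathbf{kq}\xrightarrow{\eta}\Sigma^{-2\alpha}s_2\mathbf{kq}\xrightarrow{\eta}\cdots\bigr).
\]
Feeding in the known slices $s_*\mathbf{kq}$ and the $\eta$-action on them --- accessible from R\"ondigs--{\O}stv{\ae}r's computation of $s_*\KQ$ together with the comparison of $\mathbf{kq}$ with $\mathbf{kgl}$, $H\ZZ$, and $H\ZZ/2$ --- one reads off the colimit: $s_0\kw$ is built from the bottom $H\ZZ$-summand (the unit $1$, coming from $s_0\mathbf{kq}$) together with the summands on the monomials $(\eta^{-2}\sqrt\beta)^b$, $b\ge1$ --- so that $|\sqrt\beta|=2+2\alpha$ sits in slice degree $2$, matching $s_2\kw\simeq\Sigma^{2\alpha}s_0\kw$ --- which come from the classes in the higher slices $s_{2b}\mathbf{kq}$ that become $\eta$-periodic in the limit. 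The step I expect to be the main obstacle is to upgrade this description of the associated graded to the asserted extension: to show that $s_0\kw$ is not a wedge of these summands but carries the nontrivial $k$-invariant recorded by $\eta^2\xrightarrow{\Sq^1}\sqrt\beta$. This should be extracted from the analogous nonsplit extensions (the $\Sq^1$-attaching maps) already present in $s_*\KQ$, by a bidegree count identifying the relevant $k$-invariant with a Bockstein-type operation.

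Finally, with $s_*\kw$ in hand, the first differential $d_1\colon s_0\kw\to\Sigma^{1+\alpha}s_0\kw$ is a matrix whose entries are stable cohomology operations between shifts of $H\ZZ$ and $H\ZZ/2$, so each entry lies in the short list allowed by its bidegree ($0$, the identity, multiplication by $\tau$ or $\rho$, $\Sq^1$, $\Sq^2$, $\Sq^2+\rho\Sq^1$, $\Sq^3\Sq^1$, $\Sq^2\Sq^1+\Sq^3$, and so on). I would pin down each entry by transporting the first slice differential of $\mathbf{kq}$ along the localization map --- equivalently, by naturality of the slice spectral sequence along the unit $\eta^{-1}\sphere\to\kw$, whose source differential is governed by the dual motivic Steenrod operations on the $\Ext_{MU_*MU}$-generators $\alpha_i$ (the origin of the $\Sq^2$ and $\Sq^3\Sq^1$ terms), supplemented by the comparison with $\mathbf{kgl}$. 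Restricting the same maps to slice $0$ gives the effect of the unit on slices: with respect to the splitting of $s_0\eta^{-1}\sphere$ coming from the monomial basis of $\alpha_1^{-1}\slice_1$, the unit is the inclusion of a direct summand on the even-degree pieces and $\Sq^1$ (landing in the next cell of $s_0\kw$) on the odd-degree ones, the parity distinguishing the monomials detected by Witt $K$-theory from those carried in by the Bockstein. As noted in \aref{rmk:mult}, all of these identifications take place additively, at the level of the spectral sequence pages, and are insensitive to the $\Sq^1$-ambiguity in the multiplicative structure.
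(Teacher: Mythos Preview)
The paper does not actually prove this theorem: its proof consists of two citations, \cite[Theorem~19]{ARO:veryEffective} for the slice description, multiplicative structure, and first differential, and \cite[Lemmas~2.28, 2.29]{RSO:pi1} for the behaviour of the unit map on slices. Your proposal is therefore not a competing proof of the same statement but an attempt to reconstruct the content of those references from first principles.

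The outline you give is reasonable and is, in broad strokes, the strategy those references pursue --- reduce to a single slice via $\eta$-periodicity, feed in the known slices and $\eta$-action for Hermitian $K$-theory, and determine the differential entries by bidegree and naturality. But as written it is a plan, not a proof. You explicitly flag the identification of the $k$-invariant $\eta^2\xrightarrow{\Sq^1}\sqrt\beta$ as ``the main obstacle'' and leave it unresolved; the determination of the individual matrix entries is described only at the level of ``I would pin down each entry by\ldots''; and the effect of the unit on odd summands is asserted (``the parity distinguishing\ldots'') rather than derived. None of these steps is wrong, but none is carried out either. One technical caution: you write $\kw\simeq\eta^{-1}\mathbf{kq}$ with $\mathbf{kq}$ the connective cover of $\KQ$, whereas $\kw$ is defined here as the connective cover of $\KW=\eta^{-1}\KQ$; connective covers in the homotopy $t$-structure are right adjoints and need not commute with the filtered colimit inverting $\eta$, so this identification requires justification (for the slice computation it is harmless, since the nonnegative slices of $\kw$ agree with those of $\KW$, but you should say which object you are taking slices of).
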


\begin{proof}
  The description of the slices, as well as their multiplicative
  structure, is given in \cite[Theorem 19]{ARO:veryEffective}. The
  behaviour of the unit map follows from \cite[Lemmas 2.28, 2.29]{RSO:pi1}.
\end{proof}

Note that $s_*\kw$ and the pattern of $d_1$ differentials is precisely the black portion of \aref{fig:E1}.  The remaining portion of $s_*\eta^{-1}S$ (the red part of \aref{fig:E1}) is handled by the following theorem.

\begin{theorem}\label{thm:sigma_infty}
Over $\kk=\CC$, there is a unique homotopy class $\sigma_\infty:\Sigma^3 \kw\to \eta^{-1}\sphere$ inducing an isomorphism on $\Pi_3$.  This map induces 
$(1,\Sq^1)$ on every summand of a slice.
\end{theorem}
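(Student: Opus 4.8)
The plan is to work entirely over $\CC$, where the homotopy of both spectra is known: $\pi_{m+n\alpha}\eta^{-1}\sphere$ vanishes unless $m\equiv 0$ or $3\pmod 4$ \cite{AM,GI:etaC}, while $\pi_{m+n\alpha}\kw = W(\CC) = \FF_2$ when $m\ge 0$ and $m\equiv 0\pmod 4$ and vanishes otherwise, since $\kw$ is the connective cover of the $4$-periodic Witt $K$-theory spectrum $\KW$. I would construct $\sigma_\infty$ and prove its uniqueness by obstruction theory along a cell structure on $\kw$ refining its slice filtration, and then extract the effect on slices by naturality.

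For existence and uniqueness, apply $[\,\Sigma^3(-),\eta^{-1}\sphere\,]$ to such a filtration of $\kw$, whose associated graded is identified by \aref{thm:kw_slice}. The requirement that $\sigma_\infty$ be an isomorphism on $\Pi_3$ forces its restriction to the bottom layer $\Sigma^3\sphere\to\Sigma^3\kw$ to be the generator of the corresponding homotopy module of $\eta^{-1}\sphere$. Extending over the remaining layers of $\Sigma^3\kw$, which lie in Morel degrees $\equiv 3\pmod 4$: the successive obstructions lie in subquotients of $\pi_\star\eta^{-1}\sphere$ in Morel degrees $\equiv 2\pmod 4$, which vanish, so an extension exists. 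For uniqueness one examines the indeterminacy at each stage; it lies in subquotients of $\pi_\star\eta^{-1}\sphere$ in Morel degrees $\equiv 3\pmod 4$, which do \emph{not} vanish. What rescues uniqueness is that the boundary maps attaching the higher layers of $\kw$ are, after inverting $\eta$, the ``image of $J$'' classes in $\pi_\star\eta^{-1}\sphere$, so precomposition with them is multiplication by those classes; the relations in $\pi_\star\eta^{-1}\sphere$ over $\CC$ \cite{AM,GI:etaC} --- in particular $\sigma^2=0$ while $\sigma$ times the relevant periodicity class is nonzero --- force the pertinent multiplication maps to be isomorphisms, killing the indeterminacy at every stage. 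Finally, as $\kw$ has infinitely many nonzero slices one must check that the $\lim^1$-term in $[\Sigma^3\kw,\eta^{-1}\sphere]=\lim_n[\Sigma^3\kw^{(n)},\eta^{-1}\sphere]$ vanishes; this is precisely the interchange of the $\alpha_1$-inverting colimit with the slice limit that underlies the strong convergence over $\CC$ (cf.~\aref{thm:etaSlices}, \aref{cor:fdconv}).

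Granting $\sigma_\infty$, the effect on slices follows formally. The map induces a map of slice towers, hence of slice spectral sequences, and in particular a map $\Sigma^3 s_*\kw\to s_*\eta^{-1}\sphere$ of $s_*\sphere$-modules in each slice degree. On the bottom summand it is the isomorphism supplied by the $\Pi_3$-normalisation, and $s_*\sphere$-linearity --- via the classes $\alpha_3,\alpha_4$ of \aref{thm:slices} --- propagates this to an identity onto one $H\FF_2$-summand of $s_q\eta^{-1}\sphere$ for each summand of $\Sigma^3 s_q\kw$; this is the ``$1$''. The ``$\Sq^1$'' component is then forced: by \aref{thm:kw_slice} the slices $s_*\kw$ are assembled from copies of $H\ZZ$ with the nontrivial $k$-invariant $\eta^2\xrightarrow{\Sq^1}\sqrt\beta$, so the map to $s_q\eta^{-1}\sphere$ cannot miss the corresponding $\Sq^1$-shifted summands; comparing with the known slice-level behaviour of the unit $\eta^{-1}\sphere\to\kw$ on $s_0\eta^{-1}\sphere$ (inclusion on even summands, $\Sq^1$ on odd summands, again \aref{thm:kw_slice}) pins this component down to $\Sq^1$. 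This exhibits the ``red part'' of \aref{fig:E1} as the image of $\sigma_\infty$ on slices.

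The main obstacle is the uniqueness step, which needs a sufficiently precise handle on the maps attaching the higher slices of $\kw$ and on the induced multiplications in $\pi_\star\eta^{-1}\sphere$. A cleaner packaging would be to first establish, over $\CC$, a splitting $\eta^{-1}\sphere\simeq\kw\vee\Sigma^3\kw$ realising the $\pi_\star\kw$-module decomposition $\pi_\star\eta^{-1}\sphere\cong\pi_\star\kw\oplus\Sigma^{3+4\alpha}\pi_\star\kw$ (with $\beta$ acting through $\eta^{-1}\mu$ and using $\Sigma^{3+4\alpha}\kw\simeq\Sigma^3\kw$, as $\eta$ is invertible on $\kw$); then $\sigma_\infty$ is just the inclusion of the second wedge summand and all three assertions are immediate. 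Producing the splitting requires constructing a section of the unit $\eta^{-1}\sphere\to\kw$, which is an obstruction-theoretic computation of the same flavour, so the difficulty is relocated rather than removed. A secondary, purely bookkeeping nuisance is that the multiplication on $s_*\eta^{-1}\sphere$ is not $H\FF_2$-linear but twisted by $\Sq^1$-terms (cf.~\aref{rmk:mult}); since only the additive, summand-by-summand statement is needed, this causes no real trouble.
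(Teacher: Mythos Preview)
Your plan is essentially the paper's proof. The paper uses the explicit cell presentation of $\kw$ from \cite[Section~4]{roendigs:etainv} (with successive cofibers $\Sigma^{4n}\eta^{-1}\sphere$), obtains existence from Andrews--Miller vanishing in degrees $\equiv 2\pmod 4$, gets stagewise uniqueness by showing the relevant precomposition sends $\eta^{-5n}\mu_9^n\mapsto\eta^{-5n-4}\sigma\mu_9^n$ and is hence an isomorphism (so your ``$\sigma$ times the periodicity class is nonzero'' is exactly the point; $\sigma^2=0$ is incidental here), handles $\lim^1$ directly via finiteness of each $[\Sigma^4 D_n,\eta^{-1}\sphere]$ rather than via a convergence argument, and reads the slice effect off from the unit map as you do.
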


\begin{proof}
Fix $\kk=\CC$.  Recall from \cite[Section 4]{roendigs:etainv} that there is a cell presentation of $\kw$ over $\CC$ of the following form.  Namely, there is a sequence of cellular motivic spectra factoring the unit of $\kw$ as
\[
  \eta^{-1}\sphere = \Dd_1\xrightarrow{i_1} \Dd_2\xrightarrow{i_2} \cdots \to \Dd_n\xrightarrow{i_n} \cdots \to \kw
\]
such that for every $n$ the map $\Dd_n\to \kw$ is $(4n-1)$-connective and the composition $\eta^{-1}\sphere\to \Dd_n\to \kw$ induces isomorphisms on $\Pi_{4k}$.  For every $n\ge 1$, there is a unique nontrivial class $a_n:\Sigma^{4n-1}\eta^{-1}\sphere\to \Dd_n$ in $\pi_{4n-1}\Dd_n\cong \pi_{4n-1}\eta^{-1}\sphere$ such that
\[
  \Sigma^{4n-1}\eta^{-1}\sphere\xrightarrow{a_n} \Dd_n\xrightarrow{i_n} \Dd_{n+1}\xrightarrow{c_n} \Sigma^{4n}\eta^{-1}\sphere
\]
is a homotopy cofiber sequence with $c_n$ inducing an isomorphism on $\Pi_{4k+3}$ whenever $k\ge n$.  Taking the colimit as $n\to \infty$ gives a cell presentation of $\kw$.

We now construct $\sigma_\infty$.  Consider the map $\sigma_1=\eta^{-4}\sigma:\Sigma^3\eta^{-1}\sphere = \Sigma^3\Dd_1\to \eta^{-1}\sphere$.  Assume for induction that for some $n\ge 1$ a map $\sigma_n:\Sigma^3\Dd_n\to \eta^{-1}\sphere$ is given such that
\begin{enumerate}[(1)]
\item $\sigma_n i_{n-1} = \sigma_{n-1}$ and
\item $[\Sigma^4a_{n-1},\eta^{-1}\sphere]:[\Sigma^4\Dd_{n-1},\eta^{-1}\sphere]\to [\Sigma^{4n-1}\eta^{-1}\sphere,\eta^{-1}\sphere]$ is an isomorphism.
\end{enumerate}
Then the cofiber sequence above induces a long exact sequence
{\scriptsize
\[
  [\Sigma^{4n+2}\eta^{-1}\sphere,\eta^{-1}\sphere]\xleftarrow{a_n} [\Sigma^3 \Dd_n, \eta^{-1}\sphere]\xleftarrow{i_n} [\Sigma_3\Dd_{n+1},\eta^{-1}\sphere]\xleftarrow{c_n} [\Sigma^{4n+3}\eta^{-1}\sphere,\eta^{-1}\sphere]\xleftarrow{a_n} [\Sigma^4 \Dd_n, \eta^{-1}\sphere]\leftarrow \cdots.
\]
}
The Andrews-Miller theorem on $\pi_\star\eta^{-1}\sphere$ \cite{AM} implies
\[
  0 = [\Sigma^{4n+2}\sphere,\eta^{-1}\sphere] = [\Sigma^{4n+2}\eta^{-1}\sphere,\eta^{-1}\sphere],
\]
showing that $\sigma_n$ lifts to a map $\sigma_{n+1}$ such that $\sigma_{n+1}i_n=\sigma_n$.

Now note that assumption (2) implies that
\[
  [\Sigma^4 c_{n-1},\eta^{-1}\sphere]:[\Sigma^{4n}\eta^{-1}\sphere,\eta^{-1}\sphere]\to [\Sigma^4\Dd_n,\eta^{-1}\sphere]
\]
is surjective; furthermore, the composition
\[
  [\Sigma^{4n}\eta^{-1}\sphere,\eta^{-1}\sphere]\xrightarrow{[\Sigma^4c_{n-1},\eta^{-1}\sphere]} [\Sigma^4\Dd_n,\eta^{-1}\sphere]\xrightarrow{[\Sigma^4a_n,\eta^{-1}\sphere]} [\Sigma^{4n+3}\eta^{-1}\sphere,\eta^{-1}\sphere]
\]
is the map sending $\eta^{-5n}\mu_9^n$ to $\eta^{-5n-4}\sigma\mu_9^n$, hence an isomorphism.  It follows that the map $[\Sigma^4a_n,\eta^{-1}\sphere]$ is an isomorphism, as desired.  (In fact, we also get that there is a \emph{unique} $\sigma_{n+1}:\Sigma^3\Dd_{n+1}\to \eta^{-1}\sphere$ such that $\sigma_{n+1}i_n=\sigma_n$.)

Induction and the universal property of colimits now produces a map $\sigma_\infty:\Sigma^3\Dd_\infty\simeq \Sigma^3\kw\to \eta^{-1}\sphere$ sending $1\in \pi_3\Sigma^3\kw$ to $\eta^{-4}\sigma\in \pi_3\eta^{-1}\sphere$.  The uniqueness of $\sigma_\infty$ follows from the Milnor exact sequence and the vanishing of $\lim_n^1 [\Sigma^4\Dd_n,\eta^{-1}\sphere]$ (every group $[\Sigma^4\Dd_n,\eta^{-1}\sphere]$ beging finite of order $2$).

Since $\sigma_\infty$ is a map of $\eta^{-1}\sphere$-modules, it induces isomorphisms on $\Pi_{4m+3}$ for every integer $m$.  The statement on slices follows from
the behaviour of the unit map $\eta^{-1}\sphere \to \kw$ on slices given
in \aref{thm:kw_slice}.
\end{proof}

\begin{proof}[Proof of \aref{thm:d1}]
As $\eta^{-1}\sphere$ and $s_*\eta^{-1}\sphere$ are invariant under base change, it suffices to determine the first slice differential $d_1$ over $\ZZ[1/2]$. 
On a summand $\Sigma^nH\FF_2$, it is of the form
\[ (x_{n}\tau,a_{n}\tau\Sq^1+\beta_{n},b_{n}\Sq^2+\gamma_{n},c_{n}\Sq^2\Sq^1+d_{n}\Sq^3,e_{n}\Sq^3\Sq^1)\]
with $x_{n},a_{n},b_{n},c_{n},d_{n},e_{n}$ elements in $\ZZ/2$, and 
$\beta_{n},\gamma_{n}$ square classes
of units in $\ZZ[1/2]$. 
The behavior of the unit map $\eta^{-1}\sphere\to \kw$ on slices from 
\aref{thm:kw_slice} provides immediate restrictions:
\begin{align*}
  x_{4n}&=0, &a_{4n}\rho+\beta_{4n}&=\gamma_{4n}, &b_{4n}&=1, &e_{4n}&=1+c_{4n},\\ 
x_{4n+1}+a_{4n+1}&=1,  &(a_{4n+1}+1)\rho&=\beta_{4n+1},  &c_{4n+1}&=1,  &b_{4n+1}&=d_{4n+1},\\ 
 x_{4n+2}&=1,  &(a_{4n+2}+1)\rho+\beta_{4n+2}&=\gamma_{4n+2},  &b_{4n+2}&=1,  &e_{4n+2}&=1+c_{4n+2},\\ 
 x_{4n+3}&=a_{4n+3},  &a_{4n+3}\rho&=\beta_{4n+3},  &c_{4n+3}&=1,  &b_{4n+3}&=d_{4n+3}.
\end{align*}
The map $\sigma_\infty:\Sigma^3\kw_\CC\to \eta^{-1}\sphere_\CC$ on 
slices from \aref{thm:sigma_infty} imposes further restrictions:
\begin{align*}
x_{4n+1}&=1, &a_{4n+1}&=0, &b_{4n+1}&=c_{4n+1}=d_{4n+1}=1, \\e_{4n+1}&=1+d_{4n+2},
&x_{4n+3}&=0=a_{4n+3}, &b_{4n+3}&=c_{4n+3}=d_{4n+3}=1, &e_{4n+3}&=1+d_{4n+4}.
\end{align*}
Base change and the previous equations then provide the following equations:
\[\beta_{4n+1}=\rho, \quad\gamma_{4n+1}+\beta_{4n+2}=\rho, \quad\beta_{4n+3}= 
\gamma_{4n+3}=\gamma_{4n} =\beta_{4n}=\beta_{4n+2}=0.\]
Since the composition $d_1\circ d_1=0$, Adem relations
imply further coefficients. Considering the component
\[ d_1^2\colon \Sigma^{4n+2}H\FF_2 \to \Sigma^{4n+1+2\alpha }H\FF_2\]
implies that $a_{4n+2}=a_{4n}=0$. 
Considering the component
\[ d_1^2\colon \Sigma^{4n}H\FF_2 \to \Sigma^{4n+2+2\alpha }H\FF_2\]
provides that $c_{4n}=d_{4n}=0$, and hence $e_{4n}=1=e_{4n+3}$. 
The similar component 
\[ d_1^2\colon \Sigma^{4n+2}H\FF_2 \to \Sigma^{4n+4+2\alpha }H\FF_2\]
gives only $c_{4n+2}=d_{4n+2}$, and hence $e_{4n+1}=e_{4n+2}$.
Resorting to $s_\ast \sphere$ provides the solution $c_{4n+2}=d_{4n+2}=0$
and $e_{4n+1}=e_{4n+2}=1$ as follows. Consider the summand
$\Sigma^{2n+(2n+1)\alpha}H\FF_2$ in $s_{2n+1}\sphere$ generated by $\alpha_{2n+1}$. 
The first slice differential maps it via $\mathrm{inc}^2_{y_{2n+2}}\Sq^2\Sq^1$
to the top degree summand $\Sigma^{2n+1+(2n+2)\alpha} H\ZZ/(y_{2n+2})$ in
$s_{2n+2}\sphere$ by \cite[Lemma 4.1]{RSO:pi1}; here $y_{2n+2}$ is the
order of a cyclic group and divisible by four.
The map 
\[ \Sigma^\alpha \Sigma^{2n+1+(2n+2)\alpha} H\ZZ/(y_{2n+2}) 
\to \Sigma^{2n+1+(2n+3)\alpha} H\FF_2 \]
induced by $\eta$ is the projection $\mathrm{pr}^{y_{2n+2}}_2$. Hence
already after one multiplication with $\eta$, the 
degree $2+\alpha$ part of the first differential is zero on that
summand. It follows 
that $c_{2n}=0$. 

Given the form of the differentials, the additive calculation of $\alpha_1^{-1}\slice_2$ is nearly the same as the proof of \cite[Theorem 6.3]{RO:hermitian}.  The exotic multiplication on $\alpha_1^{-1}\slice_1$ mentioned in \aref{rmk:mult} reduces to $k^M_*[\alpha_1^{\pm 1},\alpha_4,\alpha_5]/(\alpha_4^2)$ in the subquotient $\alpha_1^{-1}\slice_2$ since $\Sq^1$ is only potentially nonzero on terms involving an odd power of $\tau$, and there are no $\tau$'s in $\alpha_1^{-1}\slice_2$.
\end{proof}

\section{Computations for fields with odd characteristic or cohomological dimension at most 1}\label{sec:comp}

Given the form of $\alpha_1^{-1}\slice_2$ and the spectral sequence's convergence properties determined in the previous section, we can now make short work of the following computations.

\begin{prop}\label{thm:collapse}
If $\cd_2\kk\le 1$, then the $\alpha_1$-periodic slice spectral sequence for $\sphere$ collapses with $\alpha_1^{-1}\slice_2=\alpha_1^{-1}\slice_\infty$ and converges strongly to $\pi_\star \eta^{-1}\sphere$.
\end{prop}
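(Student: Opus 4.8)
The plan is to assemble the statement from the results of \aref{sec:slice}; once the right weight bookkeeping is in place there is essentially nothing left to do. By \aref{thm:etaSlices} the $\alpha_1$-periodic slice spectral sequence for $\sphere$ coincides, as a spectral sequence, with the slice spectral sequence for $\eta^{-1}\sphere$, so I will work with $\alpha_1^{-1}\slice$ throughout; by \aref{thm:d1} its second page is the $k^M_*$-algebra
\[
  \alpha_1^{-1}\slice_2 \cong k^M_*[\alpha_1^{\pm 1},\alpha_4,\alpha_5]/(\alpha_4^2), \qquad |\alpha_1| = (1,\alpha),\ |\alpha_4| = (4,3+4\alpha),\ |\alpha_5| = (5,4+5\alpha).
\]

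First I would show that no differential $d_r$ with $r\ge 2$ can be nonzero on this page. Assign to a nonzero class of $\alpha_1^{-1}\slice_2$ in slice filtration $q$ over $\Sigma^{m+n\alpha}$ the \emph{weight} $w := q-n$; this depends only on $(q,m+n\alpha)$, and on the algebra generators one reads off $w=0$ for each of $\alpha_1,\alpha_4,\alpha_5$, while an element of $k^M_j(\kk)$ sits in slice filtration $0$ over $\Sigma^{-j\alpha}$ and so has weight $j$. Hence $\alpha_1^{-1}\slice_2$ is concentrated in weights $\{0,1,\dots,\cd_2\kk\}=\{0,1\}$, and therefore so is every later page. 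Since $d_r$ fixes $n$ and raises $q$ by $r$, it raises $w$ by $r$; for $r\ge 2$ the target lies in weight $\ge 2$, which is zero, so $d_r=0$ and $\alpha_1^{-1}\slice_2=\alpha_1^{-1}\slice_\infty$. (This is nothing but the vanishing line already exploited in the proof of \aref{cor:fdconv}, specialised to $\cd_2\kk\le 1$, where it forces $d_{\ge 2}=0$.)

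It is worth emphasizing that this reasoning genuinely begins at the second page. The first page $\alpha_1^{-1}\slice_1\cong \pi_\star H\FF_2[\alpha_1^{\pm 1},\alpha_3,\alpha_4]/(\alpha_4^2)$ carries the infinite $\tau$-towers of $k^M_*[\tau]$, on which $w$ is unbounded, so $d_1$ is not constrained by this argument---and indeed $d_1\ne 0$ even over fields with $\cd_2\kk=0$, by \aref{thm:d1}. The content of \aref{thm:d1} is precisely that $d_1$ annihilates those towers and leaves the bounded-weight page $\alpha_1^{-1}\slice_2$ above it, so the collapse is at the second page, not the first, exactly as the proposition asserts.

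For the convergence clause I would invoke \aref{cor:fdconv} with $r=\cd_2\kk\le 1$: its hypothesis $\cd_2\kk<\infty$ holds, and it upgrades the conditional convergence of \aref{thm:sliceCond} and \aref{thm:etaSlices} to strong convergence of $\alpha_1^{-1}\slice$ onto $\pi_\star\eta^{-1}\sphere$, the vanishing line making the limit that defines slice completion a finite one, which therefore commutes with the colimit inverting $\alpha_1$. Combined with the collapse above, this identifies the abutment with $\alpha_1^{-1}\slice_2=k^M_*[\alpha_1^{\pm 1},\alpha_4,\alpha_5]/(\alpha_4^2)$. The only genuinely delicate point anywhere in the argument is that limit--colimit interchange, and it has already been dispatched in the proofs of \aref{thm:etaSlices} and \aref{cor:fdconv}; so I do not expect a real obstacle here---the proposition is, in the end, the $\cd_2\kk\le 1$ case of those two results, transported to $\sphere$ via \aref{thm:etaSlices}.
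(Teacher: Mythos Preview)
Your proof is correct and follows essentially the same route as the paper's, which simply records the proposition as a specialization of \aref{cor:fdconv} to $r=\cd_2\kk\le 1$. You have unpacked the weight/vanishing-line argument underlying that corollary in more detail, but the content is identical.
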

\begin{proof}
This is a specialization of \aref{cor:fdconv}.
\end{proof}

\begin{figure}
\begin{center}
\includegraphics[width=5in]{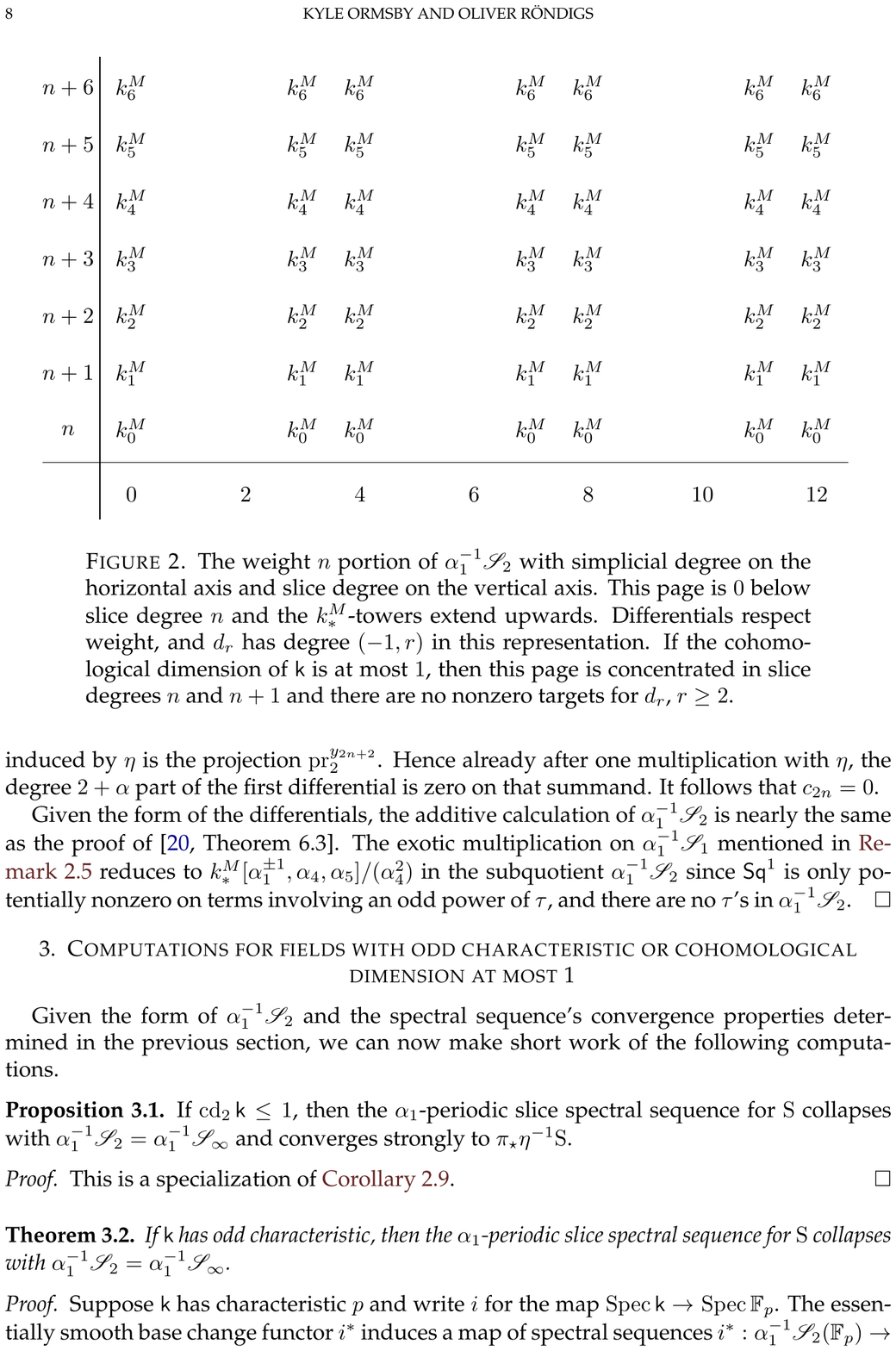}
\end{center}
\caption{The weight $n$ portion of $\alpha_1^{-1}\slice_2$ with simplicial degree on the horizontal axis and slice degree on the vertical axis.  This page is $0$ below slice degree $n$ and the $k^M_*$-towers extend upwards.  Differentials respect weight, and $d_r$ has degree $(-1,r)$ in this representation.  If the cohomological dimension of $\kk$ is at most $1$, then this page is concentrated in slice degrees $n$ and $n+1$ and there are no nonzero targets for $d_r$, $r\ge 2$.}\label{fig:E2}
\end{figure}

\begin{theorem}\label{thm:odd}
If $\kk$ has odd characteristic, then the $\alpha_1$-periodic slice spectral sequence for $\sphere$ collapses with $\alpha_1^{-1}\slice_2=\alpha_1^{-1}\slice_\infty$.
\end{theorem}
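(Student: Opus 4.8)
The plan is to prove directly that every differential $d_r$ with $r\ge 2$ vanishes, by checking this on a set of ring generators and invoking multiplicativity. By \aref{thm:d1} we have $\alpha_1^{-1}\slice_2(\kk)\cong k^M_*(\kk)[\alpha_1^{\pm 1},\alpha_4,\alpha_5]/(\alpha_4^2)$, and the slice spectral sequence is multiplicative (see \aref{rmk:mult} and \aref{thm:d1}), so each $d_r$ is a derivation. It therefore suffices to show that $d_r$ annihilates $\alpha_1^{\pm 1}$, $\alpha_4$, $\alpha_5$, and the Milnor symbols $[a]$ with $a\in\kk^\times$, which generate the coefficient ring $k^M_*(\kk)$.

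Reading degrees off \aref{thm:d1}, the classes $[a]$ and $\alpha_1$ have simplicial degree $0$ and $\alpha_4$ has simplicial degree $3$, and more generally $\alpha_1^{-1}\slice_2(\kk)$ is supported in simplicial degrees that are $\ge 0$ and congruent to $0$ or $3$ modulo $4$. Since $d_r$ lowers simplicial degree by one, its restriction to simplicial degrees $0$ and $3$ has target $0$. Thus the coefficients $k^M_*(\kk)$, together with $\alpha_1^{\pm 1}$ and $\alpha_4$, consist of permanent cycles, and only $\alpha_5$ remains — it sits in simplicial degree $4$, so its differentials land in the generally nonzero simplicial degree $3$.

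For $\alpha_5$ I would invoke naturality under base change. The prime field $\FF_p\subseteq\kk$ has $\cd_2(\FF_p)\le 1$, so by \aref{thm:collapse} the spectral sequence $\alpha_1^{-1}\slice(\FF_p)$ collapses, $\alpha_1^{-1}\slice_2(\FF_p)=\alpha_1^{-1}\slice_\infty(\FF_p)$. Since $\eta^{-1}\sphere$ and its slices are invariant under base change, $\alpha_1^{-1}\slice$ is natural in the base field, the map $\alpha_1^{-1}\slice(\FF_p)\to\alpha_1^{-1}\slice(\kk)$ carries $\alpha_5$ to $\alpha_5$ (both arising from the $\Spec\ZZ$-defined classes in $\Ext_{MU_*MU}$ via \aref{thm:slices}), and a morphism of spectral sequences carries permanent cycles to permanent cycles; hence $d_r(\alpha_5)=0$ over $\kk$ for all $r\ge 2$. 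With $d_r$ now vanishing on all generators, the Leibniz rule forces $d_r\equiv 0$, so $\alpha_1^{-1}\slice_2(\kk)=\alpha_1^{-1}\slice_\infty(\kk)$. The step I expect to require genuine care is the bidegree bookkeeping in the second paragraph: confirming from \aref{thm:d1} that the Milnor $K$-theory coefficients really occupy simplicial degree $0$ — equivalently, that $\alpha_1^{-1}\slice_2$ has no classes in negative simplicial degree nor in simplicial degrees $\equiv 1,2\pmod 4$ — so that differentials out of simplicial degrees $0$ and $3$ are forced to vanish; everything else is formal.
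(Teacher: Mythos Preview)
Your proof is correct and follows essentially the same approach as the paper's: reduce via multiplicativity to checking $d_r\alpha_5=0$, then use base change from the prime field $\FF_p$ (where $\cd_2\le 1$, so \aref{thm:collapse} applies) to conclude. You spell out the bidegree bookkeeping showing that $k^M_*$, $\alpha_1^{\pm 1}$, and $\alpha_4$ are automatically permanent cycles in more detail than the paper, which simply asserts ``Given the form of $\alpha_1^{-1}\slice_2(\kk)$, it suffices to show that $d_r^\kk\alpha_5=0$,'' but the substance is the same.
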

\begin{proof}
Suppose $\kk$ has characteristic $p$ and write $i$ for the map $\Spec\kk\to\Spec\FF_p$.  The essentially smooth base change functor $i^*$ induces a map of spectral sequences $i^*:\alpha_1^{-1}\slice_2(\FF_p)\to \alpha_1^{-1}\slice_2(\kk)$ which is given by the extension of scalars map on $k^M_*$ and the identity on $\alpha_i$ for $i=1,4,5$.  Given the form of $\alpha_1^{-1}\slice_2(\kk)$, it suffices to show that $d_r^\kk \alpha_5=0$ for all $r\ge 2$, but $d_r^\kk\alpha_5 = i^*d_r^{\FF_p}\alpha_5$, and $d_r^{\FF_p}\alpha_5=0$ by \aref{thm:collapse}.  
\end{proof}

At this point, we know that if $\kk$ has odd characteristic or if $\cd_2 \kk\le 1$, then the $\alpha_1$-periodic slice spectral sequence collapses with $\alpha_1^{-1}\slice_2=\alpha_1^{-1}\slice_\infty\cong k^M_*[\alpha_1^{\pm 1},\alpha_4,\alpha_5]/\alpha_4^2$.  Paired with the conditional convergence portion of \aref{thm:etaSlices}, this implies that the spectral sequence in fact converges strongly to $\pi_\star \eta^{-1}\sphere$. In order to completely determine $\pi_\star\eta^{-1}\sphere$ for such $\kk$, we must resolve extension problems and understand the multiplicative structure. 

Suppose that $s\equiv 0$ or $3 \pmod{4}$, and consider the short exact sequences
\begin{equation}\label{eq:slice-filt-pi_s} 
  0 \to f_{q+1}\pi_{s}\eta^{-1}\sphere \to f_{q}\pi_{s}\eta^{-1}\sphere 
  \to k^M_q \to 0 
\end{equation}
obtained from the slice filtration and the determination of $\alpha_1^{-1}\slice_\infty$
for a field of odd characteristic.
Choose a lift $g_s\in f_0\pi_s\eta^{-1}\sphere=\pi_s\eta^{-1}\sphere$ 
of the nontrivial element in $k^M_0$, compatible with field extensions
from the prime field. If $s=0$,
$\pi_0\eta^{-1}\sphere$ is known to be the Witt ring by Morel's theorem,
and $g_0$ should be chosen as the unit. The slice filtration
on $\pi_0\eta^{-1}\sphere$ coincides with the filtration by powers
of the fundamental ideal $I$, as one deduces for example 
from \cite{Levine:filtration}. The
multiplicative structure on the slice filtration then 
supplies a natural
transformation to the sequence~(\ref{eq:slice-filt-pi_s})
from the short exact sequence
\[ 0 \to I^{q+1}\to I^q \to h^{q,q} \to 0 \]
solving Milnor's conjecture on quadratic forms \cite{OVV}.
The convergence statement~\aref{cor:fdconv} shows that this
natural transformation is an isomorphism for fields of
finite cohomological dimension. Since the constructions
involved commute with filtered colimits of fields, it is
thus an isomorphism for any field of odd characteristic.
In particular, the slice filtration is Hausdorff by
the main result of \cite{arason-pfister.krull}.

\begin{prop}\label{prop:main}
If $\kk$ has odd characteristic or if $\cd_2\kk\le 1$, then, as a ring,
\[
  \pi_\star \sc(\eta^{-1}\sphere) \cong W(\kk)[\eta^{\pm 1},\sigma,\mu]/(\sigma^2)
\]
where $|\eta|=\alpha$, $|\sigma|=3+4\alpha$, and $|\mu|=4+5\alpha$.  If additionally $\cd \kk<\infty$, then $\sc(\eta^{-1}\sphere)\simeq \eta^{-1}\sphere$ and this is a computation of the $\eta$-periodic homotopy groups of the motivic sphere spectrum.
\end{prop}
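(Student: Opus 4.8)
The plan is to upgrade the input already assembled — that $\alpha_1^{-1}\slice$ degenerates at $\alpha_1^{-1}\slice_2=\alpha_1^{-1}\slice_\infty\cong k^M_*[\alpha_1^{\pm 1},\alpha_4,\alpha_5]/(\alpha_4^2)$ (\aref{thm:d1} with \aref{thm:collapse} and \aref{thm:odd}), that it converges strongly to $\pi_\star\sc(\eta^{-1}\sphere)$, and to $\pi_\star\eta^{-1}\sphere$ when $\cd\kk<\infty$ (\aref{cor:fdconv}, \aref{thm:etaSlices}), and that $\pi_0\sc(\eta^{-1}\sphere)\cong W(\kk)$ with its slice filtration equal to the filtration by powers of the fundamental ideal (the discussion immediately preceding the statement) — to a computation of the full ring. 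The only remaining content is to resolve the multiplicative extension problems, that is, to promote the known associated graded to the asserted polynomial algebra over $W(\kk)$.

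First I would exhibit the generators as permanent cycles. Strong convergence provides a class $\sigma\in f_4\pi_{3+4\alpha}\sc(\eta^{-1}\sphere)$ detecting $\alpha_4$ and a class $\mu\in f_5\pi_{4+5\alpha}\sc(\eta^{-1}\sphere)$ detecting $\alpha_5$. In odd characteristic these should be produced over the prime field $\FF_p$, where $\cd_2=1$ so \aref{thm:collapse} applies, and then pulled back along the base-change map $i^*$; the proof of \aref{thm:odd} shows $i^*$ is the identity on $\alpha_1,\alpha_4,\alpha_5$ and respects the slice filtration, making $\sigma$ and $\mu$ natural in $\kk$. Since $6\not\equiv 0,3\pmod{4}$, the page $\alpha_1^{-1}\slice_\infty$ vanishes in total simplicial degree $6$, so $\pi_{6+8\alpha}\sc(\eta^{-1}\sphere)=0$ and hence $\sigma^2=0$ is automatic. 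With Morel's theorem \cite{morel:pi0} identifying the unit of $\pi_0$, the classes $\eta$, $\sigma$, $\mu$ then assemble into a homomorphism of bigraded rings
\[
  \varphi\colon W(\kk)[\eta^{\pm 1},\sigma,\mu]/(\sigma^2)\longrightarrow \pi_\star\sc(\eta^{-1}\sphere).
\]
In each bidegree $(m,n)$ the source is zero when $m<0$ or $m\not\equiv 0,3\pmod{4}$, and otherwise is free of rank one over $W(\kk)$ on the unique monomial $\eta^k\sigma^\varepsilon\mu^b$ (with $\varepsilon\in\{0,1\}$, $b\ge 0$) of that bidegree.

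Next I would show $\varphi$ is an isomorphism bidegree by bidegree. In the bidegrees where the source vanishes, $\alpha_1^{-1}\slice_\infty$ also vanishes, hence so does the target; there is nothing to prove. In a bidegree supporting the monomial $\omega=\eta^k\sigma^\varepsilon\mu^b$, the map $\varphi$ is the composite of Morel's isomorphism $W(\kk)\xrightarrow{\sim}\pi_0\sc(\eta^{-1}\sphere)$ with multiplication by $\varphi(\omega)$. This multiplication respects the slice filtration, raising slice degree by $k+4\varepsilon+5b=n$, and on associated graded it is the map $k^M_*=\mathrm{gr}\,\pi_0\to\mathrm{gr}\,\pi_{m+n\alpha}$, $c\mapsto c\cdot\alpha_1^k\alpha_4^\varepsilon\alpha_5^b$, inside $k^M_*[\alpha_1^{\pm 1},\alpha_4,\alpha_5]/(\alpha_4^2)$; this is an isomorphism because $\alpha_1$ is a unit and $\alpha_4,\alpha_5$ are polynomial generators — the relation $\alpha_4^2=0$ being immaterial since one never multiplies $\alpha_4$ by itself here. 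So multiplication by $\varphi(\omega)$ induces an isomorphism $\mathrm{gr}^q\pi_0\xrightarrow{\sim}\mathrm{gr}^{q+n}\pi_{m+n\alpha}$ for every $q$. When $\cd\kk<\infty$ there is a vanishing line parallel to $\alpha_1$-multiplication, so the slice filtration is finite in each bidegree and the five lemma finishes the job; since then $\sc(\eta^{-1}\sphere)\simeq\eta^{-1}\sphere$ this also computes $\pi_\star\eta^{-1}\sphere$. For a general field of odd characteristic, write $\kk$ as the filtered colimit of its finitely generated subfields, which have finite cohomological dimension; as $\eta^{-1}\sphere$ and its slices are invariant under base change, $\alpha_1^{-1}\slice$ and (via strong convergence) its abutment commute with this colimit, and — the generators being natural — $\varphi$ over $\kk$ is a filtered colimit of isomorphisms, exactly as in the treatment of $\pi_0$ above.

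The main obstacle, I expect, is not any individual step but securing simultaneously the ingredients that make the extension argument valid: degeneration at $\alpha_1^{-1}\slice_2$, \emph{strong} (not merely conditional) convergence — so the slice filtrations on source and target of $\varphi$ are exhaustive, complete and Hausdorff, enabling the five lemma — and the prior identification of the anchoring group $\pi_0\sc(\eta^{-1}\sphere)$ as genuinely $W(\kk)$ with the $I$-adic filtration rather than a completion. All three are in hand at this point. The exotic summand in the multiplication on $\alpha_1^{-1}\slice_1$ recorded in \aref{rmk:mult} is irrelevant, since it is supported on classes carrying an odd power of $\tau$ while $\alpha_1^{-1}\slice_2$ is $\tau$-free; and the single relation $\sigma^2=0$ that might have required an extension computation is instead forced by the vanishing of $\pi_{6+8\alpha}$. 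What is left is the bookkeeping of monomials and the naturality statement underpinning the colimit reduction.
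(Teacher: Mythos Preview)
Your proposal is correct and follows essentially the same approach as the paper: both arguments anchor at Morel's identification $\pi_0\cong W(\kk)$ with slice filtration equal to the $I$-adic filtration, use multiplicativity of the slice filtration to transport this identification to the other nonzero bidegrees via the chosen lifts of $\alpha_4,\alpha_5$, and handle general odd-characteristic fields by writing them as filtered colimits of finitely generated subfields of finite cohomological dimension. Your write-up is simply more explicit---packaging the comparison as a ring homomorphism $\varphi$ and checking it is an isomorphism on associated graded, and noting that $\sigma^2=0$ is forced by $\pi_{6+8\alpha}=0$---whereas the paper compresses all of this into ``the above filtration considerations'' and ``there is no room for hidden extensions.''
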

\begin{proof}
The additive structure (which is simply a copy of $W(\kk)$ in nonnegative simiplicial degrees congruent to $0$ or $3$ mod $4$) follows from the above filtration considerations.  There is no room for hidden extensions, so the result follows.
\end{proof}

\section{Characteristic $0$ fields}\label{sec:char0}

We now consider the $\alpha_1$-periodic slice spectral sequence over a general field $\kk$ of characteristic $0$.  We prove that for any $\kk$ this spectral sequence converges strongly to $\pi_\star \sc(\eta^{-1}\sphere)$.  Moreover, the spectral sequence over $\QQ$ completely determines the spectral sequence over $\kk$ in a manner that we make precise in \aref{thm:profile}.  This allows us to extend the conclusion of \aref{prop:main} to fields with $\cd_2\kk\le 2$ and to extensions of $\QQ(\sqrt{-1})$, resulting in \aref{thm:main}.  We conclude with a conjectural description of the differentials which we hope will inspire further work on this problem.

The structure of our argument is somewhat surprising.  After proving that $d_2^\QQ=0$, we are able to put strong restrictions on the form of the differentials which may appear in $\alpha_1^{-1}\slice(\QQ)$.  We then employ a theorem of Orlov-Vishik-Voevodsky \cite{OVV} to show that for arbitrary $\kk/\QQ$, the differentials in $\alpha_1^{-1}\slice(\kk)$ are of the same form.  The proscribed form of the differentials guarantees that Boardman's $RE_\infty=0$, whence strong convergence follows.  The primary obstruction to computing the differentials seems to be the lack of a good description of $\sc(\eta^{-1}\sphere)$.

We make some preliminary definitions in order to start our arguments.  Recall that $\alpha_1^{-1}\slice_1\cong \pi_\star H\FF_2[\alpha_1^{\pm 1},\alpha_3,\alpha_4]/\alpha_4^2$.  For $k\ge 0$, set $\alpha_{2k+1}:=\alpha_3^k\alpha_1^{1-k}$, and for $k\ge 2$ set $\alpha_{2k}:=\alpha_4\alpha_3^{k-2}\alpha_1^{2-k}$.  These classes are chosen so that $\bar{\alpha}_\ell\mapsto \alpha_\ell$ under the localization map $\slice_1\to \alpha_1^{-1}\slice_1$ for all $\ell\ne 2$.  Note that as a $\pi_\star H\FF_2[\alpha_1^{\pm 1}]$-module, $\alpha_1^{-1}\slice_1$ is generated by $1,\alpha_3,\alpha_4,\alpha_5,\alpha_6,\ldots$.  Also note (for the purposes of applying the Leibniz rule) that, up to multiplication by a unit, $\alpha_{4k+1}$ is the square of $\alpha_{2k+1}$.

\begin{lemma}\label{lemma:d2}
The $d_2$ differential in $\alpha_1^{-1}\slice(\QQ)$ is trivial.
\end{lemma}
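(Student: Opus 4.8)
The plan is to exploit the rigidity of $\alpha_1^{-1}\slice_2$ over $\QQ$ together with known low-degree computations. By \aref{thm:d1}, $\alpha_1^{-1}\slice_2(\QQ)\cong k^M_*(\QQ)[\alpha_1^{\pm 1},\alpha_4,\alpha_5]/(\alpha_4^2)$ with $|\alpha_4|=(4,3+4\alpha)$ and $|\alpha_5|=(5,4+5\alpha)$, and the $d_2$ differential has the form $d_2\colon \alpha_1^{-1}\slice_2^{q,m+n\alpha}\to \alpha_1^{-1}\slice_2^{q+2,m-1+n\alpha}$. Since the spectral sequence is $\alpha_1$-periodic and multiplicative, and since $\alpha_1$ is a unit permanent cycle, $d_2$ is a $k^M_*(\QQ)[\alpha_1^{\pm 1}]$-linear derivation determined by its values on the algebra generators $\alpha_4$ and $\alpha_5$ (using that, up to a unit, $\alpha_9=\alpha_5^2$ by the remark before \aref{lemma:d2}, so the Leibniz rule controls everything). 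Thus it suffices to show $d_2\alpha_4=0$ and $d_2\alpha_5=0$.

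First I would pin down the possible targets by a bidegree count. The class $\alpha_4$ lives in slice degree $4$ and total degree $3+4\alpha$; its potential $d_2$-image lies in slice degree $6$, total degree $2+4\alpha$, i.e.\ in the $k^M_*(\QQ)[\alpha_1^{\pm 1}]$-span of monomials $\alpha_1^{a}\alpha_4^{\epsilon}\alpha_5^{b}$ of the correct bidegree; one checks the only candidates are multiples of $\alpha_1^{-3}\alpha_5\cdot(\text{element of }k^M_2)$ or $\alpha_1^{-2}\alpha_4\cdot(\text{element of }k^M_1)$ — a short computation with the two equations $q+2=6$, $m-1=2$, $n$ fixed — so $d_2\alpha_4\in k^M_1(\QQ)\alpha_1^{-2}\alpha_4 \oplus k^M_2(\QQ)\alpha_1^{-3}\alpha_5$. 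Similarly $d_2\alpha_5$ is constrained to a finite-dimensional $k^M_*(\QQ)$-module of explicit monomials in slice degree $7$. Then I would invoke the low-degree homotopy: by \aref{thm:etaSlices} and Wilson's computation of $\pi_\star\eta^{-1}\sphere$ over $\QQ$ \cite{Wilson} (alternatively R\"ondigs's vanishing $\pi_1\eta^{-1}\sphere=\pi_2\eta^{-1}\sphere=0$ \cite{roendigs:etainv} together with the known value of $\pi_3\eta^{-1}\sphere$), the $E_\infty$-page in the relevant total degrees is forced, and comparing ranks shows these $d_2$'s cannot be nonzero — any nonzero value would kill too much in $\pi_{3+n\alpha}$ or $\pi_{4+n\alpha}$ of the $\eta$-periodic sphere over $\QQ$, contradicting the established values. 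For the coefficients in $k^M_1(\QQ)$ and $k^M_2(\QQ)$, I would use base change to $\RR$ and to finite residue fields (where \aref{thm:collapse} and \aref{thm:odd} give collapse) to see that each coefficient must vanish after enough specializations, and since $k^M_*(\QQ)\to \prod_p k^M_*(\FF_p)\times k^M_*(\RR)$ is injective in the relevant degrees (Milnor's computation of $k^M_*(\QQ)$), the coefficients are $0$.

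The main obstacle will be making the bidegree bookkeeping airtight: one must be careful that the $\alpha_1$-periodicity does not introduce unexpected targets in negative $\alpha_1$-powers, and that the multiplicative relation $\alpha_4^2=0$ is compatible with $d_2$ being a derivation (which forces $2\alpha_4 d_2\alpha_4=0$, automatically satisfied in characteristic $2$ coefficients but still a consistency check on the form of $d_2\alpha_4$). A secondary subtlety is that the identification of $\alpha_1^{-1}\slice_2$ as a $k^M_*$-algebra in \aref{thm:d1} is only established additively plus the stated ring structure, so I would phrase the derivation argument using only the module structure over $k^M_*(\QQ)[\alpha_1^{\pm 1}]$ and the two generators, which is enough. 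Once $d_2\alpha_4=d_2\alpha_5=0$ is established on generators, $\alpha_1^{-1}$-linearity and the Leibniz rule give $d_2\equiv 0$ on all of $\alpha_1^{-1}\slice_2(\QQ)$, completing the proof.
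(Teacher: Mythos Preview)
Your reduction to the generators is sound, and in fact $d_2\alpha_4=0$ holds for degree reasons alone: the target tridegree is $(q,m,n)=(6,2,4)$, and since the simplicial degree of any monomial $\alpha_1^a\alpha_4^\epsilon\alpha_5^b$ is $3\epsilon+4b$, there is simply nothing in simplicial degree $2$. (Your claimed targets $k^M_1\alpha_1^{-2}\alpha_4$ and $k^M_2\alpha_1^{-3}\alpha_5$ sit in slice degree $2$, not $6$, so that bookkeeping is off --- but the conclusion survives.) So everything comes down to showing $d_2^\QQ\alpha_5=0$, whose only possible target is $k^M_2(\QQ)\{\alpha_1^3\alpha_4\}$.

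The genuine gap is in your detection step. There is no map of schemes $\Spec\FF_p\to\Spec\QQ$, hence no base change functor $\alpha_1^{-1}\slice(\QQ)\to\alpha_1^{-1}\slice(\FF_p)$; the maps in Milnor's computation of $k^M_*(\QQ)$ in terms of $k^M_{*-1}(\FF_p)$ are \emph{residue} maps, not extension-of-scalars, so they do not compare slice differentials. And over $\RR$ one has $\cd_2(\RR)=\infty$, so neither \aref{thm:collapse} nor \aref{thm:odd} applies; in fact the $\alpha_1$-periodic slice spectral sequence over $\RR$ is expected to support nontrivial higher differentials. Finally, invoking Wilson's computation \emph{over $\QQ$} together with convergence is circular here: strong convergence over $\QQ$ (and the identification of the target with $\pi_\star\eta^{-1}\sphere$) is only established later, in \aref{thm:conv}, and that argument uses the present lemma as input.

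The paper repairs exactly this step by passing to the $p$-adic fields $\QQ_p$ rather than to $\FF_p$ or $\RR$. Each $\QQ_p$ is a genuine extension of $\QQ$ with $\cd_2(\QQ_p)=2$, so \aref{cor:fdconv} gives strong convergence to $\pi_\star\eta^{-1}\sphere$ there, and Wilson's local computations force $d_2^{\QQ_p}\alpha_5=0$. The crucial arithmetic input is that $k^M_2(\QQ)\hookrightarrow\bigoplus_p k^M_2(\QQ_p)$ (quadratic Hilbert symbols plus Hilbert reciprocity let one drop the archimedean place), so the coefficient of $d_2^\QQ\alpha_5$ is detected $p$-adically and hence vanishes. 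Replacing your $\FF_p$'s by $\QQ_p$'s and dropping $\RR$ makes your outline work and recovers the paper's argument verbatim.
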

\begin{proof}
It suffices to prove that $d_2^\QQ\alpha_5=0$.  We know that $d_2^\QQ\alpha_5\in k^M_2(\QQ)\{\alpha_1^3\alpha_4\}$.  Base change to $\QQ_p$ provides a comparison map $\alpha_1^{-1}\slice(\QQ)\to \alpha_1^{-1}\slice(\QQ_p)$.  Since $\cd_2(\QQ_p)=2$, \aref{cor:fdconv} implies that $\alpha_1^{-1}\slice(\QQ_p)$ converges strongly to $\pi_\star\eta^{-1}\sphere$.  Furthermore, every class in $k^M_2(\QQ)$ is detected in some $k^M_2(\QQ_p)$.\footnote{Indeed, \cite[Lemma A.1]{KM} tells us that the map $k^M_2(\QQ)\to k^M_2(\RR)\oplus\bigoplus_p k^M_2(\QQ_p)$ is injective and computed on components by quadratic Hilbert symbols.  Hilbert reciprocity then implies that $k^M_2(\QQ)\to \bigoplus_p k^M_2(\QQ_p)$ is injective as well.}  As such, the computations of Wilson \cite{Wilson} over $\QQ_p$ imply that $d_2^\QQ\alpha_5=0$.
\end{proof}

\begin{theorem}\label{thm:Qprofile}
There is a nondecreasing\footnote{In fact, the sequence is strictly increasing unless it is eventually constant at $\infty$.} sequence of extended integers $r_k\in \ZZ_{\ge 3}\cup \{\infty\}$ for $k\ge 2$ such that if $r_k<\infty$ then $d_{r_k}^\QQ\alpha_{2^k+1} = \rho^{r_k}\alpha_{2^k}\alpha_1^{r_k+1}$, and if $r_k=\infty$ then $\alpha_{2^k+1}$ is a permanent cycle in $\alpha_1^{-1}\slice(\QQ)$.  The rest of the differentials in $\alpha_1^{-1}\slice(\QQ)$ are determined by the Leibniz rule.
\end{theorem}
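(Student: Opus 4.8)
The plan is to construct the sequence $(r_k)_{k\ge 2}$ inductively, using that $\alpha_1^{-1}\slice(\QQ)$ is a spectral sequence of $k^M_*$-algebras, so that every $d_r$ is a $k^M_*$-linear derivation, together with two inputs: the vanishing $d_2^\QQ=0$ from \aref{lemma:d2}, and the elementary fact that $k^M_n(\QQ)\cong\FF_2\{\rho^n\}$ for every $n\ge 3$ (since $\QQ$ has a unique real place and each of its non-archimedean completions has $\cd_2=2$).

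First I would pin down what can support a differential. As $d_2^\QQ=0$, the page $\alpha_1^{-1}\slice_3=\alpha_1^{-1}\slice_2\cong k^M_*[\alpha_1^{\pm1},\alpha_4,\alpha_5]/(\alpha_4^2)$ is, by the bidegrees in \aref{thm:d1}, concentrated in simplicial degrees $\equiv 0$ or $3\pmod 4$: the monomial $\alpha_4^a\alpha_5^b$ with $a\in\{0,1\}$ lies in simplicial degree $3a+4b$, while $k^M_*$, $\alpha_1^{\pm1}$ and $\rho$ contribute $0$. A differential drops simplicial degree by $1$, hence can emanate only from simplicial degree $\equiv 0\pmod 4$, and not from $0$ itself; in particular $k^M_*$, $\alpha_1$ and $\alpha_4$ are permanent cycles. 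Moreover $\alpha_5^b$ equals, up to a power of $\alpha_1$, the product $\prod_i\alpha_{2^{b_i+2}+1}$ over the positions $b_i$ of the nonzero binary digits of $b$, so by the Leibniz rule every differential is determined once we know the differentials on the classes $\alpha_{2^k+1}$, $k\ge 2$.

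Now I would induct on $k\ge 2$, writing $r_1:=2$ by convention. Suppose $r_2,\dots,r_{k-1}$ and all differentials $d_r$ with $r\le r_{k-1}$ have been determined (for $k=2$ this amounts to $d_2=0$). On the page $\alpha_1^{-1}\slice_{r_{k-1}+1}$ the degree constraint above, together with $d((\,\cdot\,)^2)=2(\,\cdot\,)d(\,\cdot\,)=0$ in characteristic $2$, shows that $d_r$ can be nonzero only on classes built from $\alpha_{2^k+1}$—all the other generators in the relevant simplicial degrees are either permanent cycles or squares of generators killed at earlier stages, using $\alpha_{2^k+1}=(\text{unit})\cdot\alpha_{2^{k-1}+1}^2$. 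A simultaneous comparison of simplicial degree, slice degree and weight confines any nonzero value $d_r\alpha_{2^k+1}$ to the single $k^M_*$-line spanned by $\alpha_1^{r+1}\alpha_{2^k}$, and a check of the monomial basis shows this class survives to the page in question and is not a boundary there. As $r\ge r_{k-1}+1\ge 3$, we have $k^M_r(\QQ)=\FF_2\{\rho^r\}$, whence $d_r\alpha_{2^k+1}\in\{0,\ \rho^r\alpha_1^{r+1}\alpha_{2^k}\}$. I let $r_k$ be the least $r$ for which this is nonzero, or $\infty$ if there is none; the Leibniz rule then produces all of $d_{r_k}$, and we pass to $\alpha_1^{-1}\slice_{r_k+1}$ to continue. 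Finally, $\alpha_{2^{k+1}+1}=(\text{unit})\cdot\alpha_{2^k+1}^2$ and $d(\text{square})=0$ force $r_{k+1}>r_k$ whenever $r_k<\infty$ and $r_{k+1}=\infty$ whenever $r_k=\infty$, which is exactly the claimed monotonicity.

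The main obstacle is the multiplicative bookkeeping inside the induction. One must check at each stage that the subquotient page $\alpha_1^{-1}\slice_{r_k+1}$ is still generated as a $k^M_*[\alpha_1^{\pm1}]$-algebra by $\alpha_4$ and $\alpha_{2^{k+1}+1}$ modulo precisely the relations forced by the earlier differentials; that the monomial $\alpha_1^{r_k+1}\alpha_{2^k}$ is annihilated by none of $d_2,\dots,d_{r_k-1}$, so the proposed target is genuinely a nonzero class; and that the three-fold degree count really does confine $d_r\alpha_{2^k+1}$ to one $k^M_*$-line. Given the explicit monomial description of $\alpha_1^{-1}\slice_2$ from \aref{thm:d1}, this is routine, but it is where the care lies; the remaining ingredients—permanence of $k^M_*$, $\alpha_1$, $\alpha_4$; identification of the coefficient with $\rho^{r_k}$; and the propagation of all other differentials by Leibniz—are then immediate.
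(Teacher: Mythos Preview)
Your overall strategy matches the paper's: induct on $k$, use $d_2^\QQ=0$, degree considerations, and the fact that $k^M_r(\QQ)=\FF_2\{\rho^r\}$ for $r\ge 3$ to pin down the possible values of $d_r\alpha_{2^k+1}$; the monotonicity argument via $\alpha_{2^{k+1}+1}=\alpha_1^{-1}\alpha_{2^k+1}^2$ and vanishing of derivations on squares in characteristic $2$ is also correct.

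There is, however, a genuine gap in the inductive step. You assert that on $\alpha_1^{-1}\slice_{r_{k-1}+1}$ the only possible sources of nonzero differentials are classes ``built from $\alpha_{2^k+1}$,'' and later that this page is generated as a $k^M_*[\alpha_1^{\pm 1}]$-algebra by $\alpha_4$ and $\alpha_{2^{k+1}+1}$. Neither claim holds. After $d_{r_j}$ (for $j<k$) has fired, the column in simplicial degree $4\ell$ with $\nu_2(4\ell)=j$ still contains the torsion classes ${}_{\rho^{r_j}}k^M_*\{\alpha_1^{?}\alpha_{4\ell+1}\}$, for instance $[u]\alpha_5$ with $[u]\rho^{r_2}=0$. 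These are nonzero over $\QQ$ (e.g.\ ${}_{\rho^{r_2}}k^M_1(\QQ)$ has index at most $2$ in $k^M_1(\QQ)$), they sit in simplicial degrees $4\ell$ with $\nu_2(4\ell)<k$, and hence they are \emph{not} in the subalgebra generated by $\alpha_4$ and $\alpha_{2^k+1}$. Since $\alpha_{4\ell+1}$ itself is no longer present on this page, you cannot write $d_r([u]\alpha_{4\ell+1})=[u]\,d_r(\alpha_{4\ell+1})$ via Leibniz, and the ``square'' argument does not apply either ($\ell$ is odd when $j=2$). Thus nothing in your argument rules out an exotic differential on such a class.

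The paper closes this gap with a short $\QQ$-specific observation you have not used: the target of any $d_r$ with $r>r_j$ emanating from ${}_{\rho^{r_j}}k^M_*\{\alpha_{4\ell+1}\}$ lands in $(k^M_*/\rho^{r_j})_m$ with $m\ge r>r_j\ge 3$, and $k^M_m(\QQ)=\FF_2\{\rho^m\}=\rho^{r_j}k^M_{m-r_j}(\QQ)$ for $m\ge r_j$, so this target group vanishes. Hence the torsion classes are forced to be permanent cycles, and only then does the inductive description of the next page go through. Once you insert this step, your argument becomes essentially the paper's.
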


\begin{remark}\label{rmk:interpret}
The above theorem may be thought of in the following terms.  In the weight $n$ $\alpha_1$-periodic slice spectral sequence, the $4k$-column of $\alpha_1^{-1}\slice_2$ is, up to multiplication by some power of the unit $\alpha_1$, generated by $\alpha_{4k+1}$, and the $4k-1$-column is generated by $\alpha_{4k}$ in the same sense.  These columns are connected by $d_{r_{\nu_2(4k)}} = \cdot \rho^{r_{\nu_2(4k)}}$ on $k^M_*$, where $\nu_2$ is $2$-adic valuation.
\end{remark}

\begin{proof}
By \aref{lemma:d2} and \aref{thm:d1}, $\alpha_1^{-1}\slice_3\cong k^M_*[\alpha_1^{\pm 1},\alpha_4,\alpha_5]/\alpha_4^2$.  If the spectral sequence does not collapse, then the first nonzero differential is necessarily of the form $d_r\alpha_5 = x\alpha_4\alpha_1^{r+1}$ for some $r\ge 3$ and $x\in k^M_r(\QQ)$.  Since $k^M_r(\QQ) = \ZZ/2\{\rho^r\}$ for $r\ge 3$, we in fact have $d_r\alpha_5=\rho^r\alpha_4\alpha_1^{r+1}$.  Set $r_2$ equal to this $r$.  The $\alpha_1^{-1}\slice_{r_2+1}$-page then has $k^M_*[\alpha_1^{\pm 1}]/\rho^{r_2}$ in positive stems congruent to $3$ mod $8$ and ${}_{\rho^{r_2}}k^M_*[\alpha_1^{\pm 1}]$ in positive stems congruent to $4$ mod $8$ (where ${}_xk^M_* = \{y\in k^M_*\mid xy=0\}$ is the $x$-torsion in $k^M_*$); the $(r_2+1)$-page also continues to have $k^M_*[\alpha_1^{\pm 1}]$ in nonnegative stems congruent to $0$ or $7$ mod $8$, and is $0$ otherwise.

The potential targets of the ${}_{\rho^{r_2}}k^M_*[\alpha_1^{\pm 1}]$ terms are all $0$, hence these classes are permanent.  Thus the next nonzero differential in the spectral sequence (if one exists) is necessarily of the form $d_{r_3}\alpha_9 = \rho^{r_3}\alpha_8\alpha_1^{r_3+1}$.  The $\rho^{r_3}$-torsion terms in the $(r_3+1)$-page are again permanent, and the next differential is of the form $d_{r_4}\alpha_{2^{4}+1}=\rho^{r_4}\alpha_{2^4}\alpha_1^{r_4+1}$.  Proceeding inductively proves the theorem.
\end{proof}

We now abstract the behavior observed in \aref{thm:Qprofile} and show that it is in fact generic.

\begin{defn}
For a given field $\kk$, suppose that there is a nondecreasing sequence of extended integers $r_k\in \ZZ_{\ge 3}\cup \{\infty\}$ for $k\ge 2$ such that the differentials $d_{r_k}\alpha_{2^k+1}=\rho^{r_k}\alpha_{2^k}\alpha_1^{r_k+1}$ and the Leibniz rule determine $\alpha_1^{-1}\slice(\kk)$.  In this case, we call $\{r_2,r_3,\ldots\}$ the \emph{profile} of $\alpha_1^{-1}\slice(\kk)$ and say that $\alpha_1^{-1}\slice(\kk)$ is \emph{determined by the profile $\{r_k\}$}.
\end{defn}  

\begin{theorem}\label{thm:profile}
Let $\{r_k\}$ denote the profile of $\alpha_1^{-1}\slice(\QQ)$ (guaranteed to exist by \aref{thm:Qprofile}).  Then for any characteristic $0$ field $\kk$, $\alpha_1^{-1}\slice(\kk)$ is also determined by the profile $\{r_k\}$.
\end{theorem}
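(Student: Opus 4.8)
Since $\chr\kk=0$ we have $\QQ\subseteq\kk$, and essentially smooth base change induces a map of multiplicative spectral sequences
\[
\phi\colon\alpha_1^{-1}\slice(\QQ)\longrightarrow\alpha_1^{-1}\slice(\kk)
\]
carrying $\alpha_i\mapsto\alpha_i$ and $\rho\mapsto\rho$ and given on coefficients by the extension-of-scalars map $k^M_*(\QQ)\to k^M_*(\kk)$. The plan is to rerun the proof of \aref{thm:Qprofile} over $\kk$, using $\phi$ to rigidify the differentials and the fact that each $d_r$ is a $k^M_*(\kk)$-linear derivation to reduce everything to its values on a short list of generators.

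By \aref{thm:d1} the page $\alpha_1^{-1}\slice_2$ over any base is $k^M_*[\alpha_1^{\pm1},\alpha_4,\alpha_5]/(\alpha_4^2)$, which is free over $k^M_*$; hence $\alpha_1^{-1}\slice_2(\kk)=k^M_*(\kk)\otimes_{k^M_*(\QQ)}\alpha_1^{-1}\slice_2(\QQ)$ with $\phi=1\otimes(-)$. Since $d_2$ is a $k^M_*(\kk)$-linear derivation with $d_2\alpha_1=0$ ($\alpha_1$ is a unit) and $d_2\alpha_4=0$ (its column supports no outgoing differential), it is determined by $d_2\alpha_5$; as $\alpha_5\in\im\phi$ and $d_2^\QQ\alpha_5=0$ by \aref{lemma:d2}, naturality forces $d_2^\kk=0$ and $\alpha_1^{-1}\slice_3(\kk)=k^M_*(\kk)[\alpha_1^{\pm1},\alpha_4,\alpha_5]/(\alpha_4^2)$. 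Proceeding page by page as in \aref{thm:Qprofile}, the spectral sequence is governed by the classes $\alpha_{2^k+1}=\alpha_5^{2^{k-2}}\alpha_1^{1-2^{k-2}}$ together with their potential targets $\alpha_{2^k}=\alpha_4\alpha_5^{2^{k-2}-1}\alpha_1^{1-2^{k-2}}$, which are monomials in $\alpha_1^{\pm1},\alpha_4,\alpha_5$ and so lie in $\im\phi$. Whenever $\alpha_{2^k+1}$ survives to page $r$ over $\QQ$ (hence, by the same naturality, also over $\kk$), one gets $d_r^\kk\alpha_{2^k+1}=\phi(d_r^\QQ\alpha_{2^k+1})$, so by \aref{thm:Qprofile} $d_{r_k}^\kk\alpha_{2^k+1}=\rho^{r_k}\alpha_{2^k}\alpha_1^{r_k+1}$ (possibly zero in $k^M_*(\kk)$) and $d_r^\kk\alpha_{2^k+1}=0$ for $r<r_k$; the Leibniz rule then determines $d_r^\kk$ on the whole $k^M_*(\kk)$-subalgebra generated by $\alpha_1^{\pm1},\alpha_4,\alpha_5$.

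The hard part will be that $\phi$ fails to detect over $\kk$ everything it detects over $\QQ$. Over $\QQ$ one has $k^M_{\ge3}(\QQ)=\FF_2\{\rho^{\,\ge3}\}$, so each $\rho^{r_k}$ is nonzero and $\alpha_{2^k+1}$ genuinely dies on page $r_k$; over a general $\kk$ the power $\rho^{r_k}$ may vanish (it does whenever $-1$ is a sum of at most $2^{r_k-1}$ squares, in particular whenever $-1$ is a sum of four squares), and then $\alpha_{2^k+1}$ survives past page $r_k$ into a range $\phi$ cannot see. Dually, when $\rho^{r_k}\ne0$ over $\kk$ the page $\alpha_1^{-1}\slice_{r_k+1}(\kk)$ acquires $\rho^{r_k}$-power-torsion summands ${}_{\rho^{r_k}}k^M_*(\kk)$ which are essentially trivial over $\QQ$ and lie outside $\im\phi$, so their permanence must be argued separately. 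For both points my plan is to base change once more to $L=\kk(\sqrt{-1})$: there $\rho=0$ and $\QQ(\sqrt{-1})$ embeds, so $\alpha_1^{-1}\slice(L)$ collapses at its third page by naturality from the case $\QQ(\sqrt{-1})$ covered by \aref{cor:fdconv}, and every class over $L$ is a permanent cycle. Invoking the Orlov--Vishik--Voevodsky theorem \cite{OVV} to identify $k^M_*$ with mod-$2$ \'etale cohomology, the Arason exact sequence
\[
k^M_{*-1}(\kk)\xrightarrow{\;\cdot\rho\;}k^M_*(\kk)\xrightarrow{\;\res_L\;}k^M_*(L)\xrightarrow{\;\operatorname{cor}_L\;}k^M_*(\kk)\xrightarrow{\;\cdot\rho\;}k^M_{*+1}(\kk)
\]
both constrains any putative extra differential over $\kk$ (its restriction to $L$ vanishes, so its value lies in $\rho\,k^M_{*-1}(\kk)$) and realizes the relevant $\rho$-torsion as corestricted from $L$; combining the projection formula for $\operatorname{cor}_L$, the collapse over $L$, and the fact that $\operatorname{cor}_L$ and $\res_L$ are maps of spectral sequences should yield the permanence of the offending classes and hence the absence of any differential outside the profile. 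The genuinely delicate step I anticipate is bootstrapping from $\rho$-torsion, which $\operatorname{cor}_L$ detects on the nose, to $\rho^{r_k}$-torsion for $r_k>1$; I expect this to require either an induction along the cellular filtration of $\kw$ used in the proof of \aref{thm:sigma_infty}, or a finer analysis --- via \cite{OVV} --- of the behaviour of $H^*_{\et}(\kk;\ZZ/2)$ under the quadratic extension $L/\kk$.
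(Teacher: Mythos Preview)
Your setup is correct and you have correctly identified the crux: once the profile differentials $d_{r_k}^{\kk}\alpha_{2^k+1}=\rho^{r_k}\alpha_{2^k}\alpha_1^{r_k+1}$ are inherited from $\QQ$ by naturality, one must show that the surviving classes ${}_{\rho^{r_k}}k^M_*(\kk)\{\alpha_{2^k+1}\}$ support no further differentials. Your plan to base change \emph{up} to $L=\kk(\sqrt{-1})$ and detect these classes via corestriction runs into exactly the obstacle you name: the Arason sequence only exhibits ${}_{\rho}k^M_*(\kk)$ as the image of $\operatorname{cor}_L$, not ${}_{\rho^{r_k}}k^M_*(\kk)$, and neither of the fixes you sketch (a $\kw$-cellular induction, or a finer look at $L/\kk$) closes this gap in any evident way. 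So as written the argument is genuinely incomplete.

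The paper resolves this by going in the opposite direction: base change \emph{down}. The OVV input is not the Arason sequence for $L/\kk$ but rather \cite[Theorem~3.3]{OVV}, which says that the annihilator of a pure symbol---here $\rho^{r_k}=[-1,\ldots,-1]$---in $k^M_*(\kk)$ is generated as a $k^M_*(\kk)$-module in degree~$1$. Thus it suffices to prove $d_r^{\kk}([u]\alpha_{2^k+1})=0$ for $r>r_k$ whenever $[u]\in k^M_1(\kk)$ satisfies $[u]\rho^{r_k}=0$. For each such $u$ one passes to the intermediate field $\QQ(u)$. If $u$ is algebraic, $\QQ(u)$ is a number field with $k^M_n(\QQ(u))\in\{0,\FF_2\{\rho^n\}\}$ for $n\ge 3$, and the $d_{r_k}$ differential already empties the target of any later $d_r^{\QQ(u)}([u]\alpha_{2^k+1})$. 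If $u$ is transcendental, Milnor's presentation of $k^M_*\QQ(u)$ yields the same conclusion. Base change along $\QQ(u)\hookrightarrow\kk$ then forces $d_r^{\kk}([u]\alpha_{2^k+1})=0$. The point is that the relevant Milnor $K$-groups of $\QQ(u)$ are tiny enough to see the vanishing directly, whereas your passage to $\kk(\sqrt{-1})$ trades $\rho$ for a field whose $k^M_*$ you do not control.
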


\begin{proof}
Consider the map of map of spectral sequences $i^*:\alpha_1^{-1}\slice(\QQ)\to \alpha_1^{-1}\slice(\kk)$ induced by essentially smooth base change along $\Spec\kk\to \Spec\QQ$.  We have
\[
  d_{r_k}^\kk \alpha_{2^k+1} = i^*d_{r_k}^\QQ \alpha_{2^k+1} = i^*\rho^{r_k}\alpha_{2^k}\alpha_1^{r_k+1} = \rho^{r_k}\alpha_{2^k}\alpha_1^{r_k+1}.
\]

It remains to show that ${}_{\rho^{r_k}}k^M_*[\alpha_1^{\pm 1}]\{\alpha_{2^k+1}\}$ supports no higher differentials. Invoking \cite[Theorem 3.3]{OVV}, we see that ${}_{\rho^{r_k}}k^M_*$ is generated in degree $1$ as a $k^M_*$-module, so it suffices to show that $d_r^\kk [u]\alpha_{2^k+1}=0$ for all $r>r_k$ and $u\in \kk^\times$ such that $[u]\rho^{r_k}=0\in k^M_{r_k+1}$.  Fix such a $u$ and consider the subextension $\kk/\QQ(u)/\QQ$.  Let $j:\Spec\QQ(u)\to \Spec\QQ$ denote the corresponding map with associated map of spectral sequences $j^*:\alpha_1^{-1}\slice(\QQ(u))\to \alpha_1^{-1}\slice(\kk)$.  Our argument now splits into two cases: $u$ algebraic, and $u$ transcendental.

First suppose that $u$ is algebraic, in which case $\QQ(u)$ is a number field.  Tate's theorem \cite[Theorem A.2]{KM} implies that $k^M_n(\QQ(u)) = \ZZ/2\{\rho^n\}$ or $0$ for $n\ge 3$ and we have already seen that $d_{r_k}^{\QQ(u)}\alpha_{2^k+1} = \rho^{r_k}\alpha_{2^k}\alpha_1^{r_k+1}$.  Recall that $r_k\ge 3$, so this differential kills $k^M_*$ classes at and above degree $r_k$.  In particular, for $r>r_k$ the target group for $d_r^{\QQ(u)}[u]\alpha_{2^k+1}$ is $0$ and hence the differential is $0$.  Finally, we see that
\[
  d_r^{\kk}[u]\alpha_{2^k+1} = j^*d_r^{\QQ(u)}[u]\alpha_{2^k+1} = 0
\]
as well, as desired.

Now suppose that $u$ is transcendental, in which case \cite[Theorem 2.3]{KM} implies that there is a split short exact sequence
\[
  0\to k^M_*\QQ\to k^M_*\QQ(u)\xrightarrow{\bigoplus \partial_\pi} \bigoplus_\pi k^M_{*-1}\QQ[u]/(\pi)\to 0
\]
where $\pi$ ranges over monic irreducible polynomials in $\QQ[u]$ and $\partial_\pi:k^M_*\QQ\to k^M_{*-1}\QQ[u]/(\pi)$ is the residue map taking $[\pi,u_2,u_3,\ldots,u_n]$ to $[u_2,\ldots,u_n]$.  In particular, for $n\ge 4$, $k^M_n\QQ(u)$ has $\FF_2$-basis consisting of $\rho^n$ and $[\pi]\rho^{n-1}$ for $\pi\in\QQ[u]$ monic irreducible.  Thus the differential $d_{r_k}^{\QQ(u)}\alpha_{2^k+1}=\rho^{r_k}\alpha_{2^k}\alpha_1^{r_k+1}$ kills $k^M_*(\QQ(u))[\alpha_1^{\pm 1}]\{\alpha_{2^k}\}$ in Milnor-degree $r_k+1$ and above.  It follows that $d_r^{\QQ(u)}[u]\alpha_{2^k+1}=0$ for $r>r_k$ and the same base change trick as in the previous paragraph implies that $d_r^\kk [u]\alpha_{2^k+1}=0$.  We conclude that $\alpha_1^{-1}\slice(\kk)$ is determined by the profile $\{r_k\}$.
\end{proof}

\begin{theorem}\label{thm:conv}
Let $\kk$ be any field of characteristic different from $2$.  Then $\alpha_1^{-1}\slice(\kk)$ converges strongly to $\pi_\star\sc(\eta^{-1}\sphere)$.  If $\cd \kk<\infty$, this target is isomorphic to $\pi_\star \eta^{-1}\sphere$.
\end{theorem}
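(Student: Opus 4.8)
The plan is to reduce everything to \aref{thm:etaSlices}, which already establishes that $\alpha_1^{-1}\slice(\kk)$ converges \emph{conditionally} to $\pi_\star\sc(\eta^{-1}\sphere)$, and that $\sc(\eta^{-1}\sphere)\simeq\eta^{-1}\sphere$ once $\cd\kk<\infty$. By Boardman's machinery \cite{boardman}, a conditionally convergent spectral sequence converges strongly exactly when its derived $E_\infty$-term $RE_\infty$ vanishes, so it suffices to prove $RE_\infty=0$. Since the weight-$n$ part of $\alpha_1^{-1}\slice_2$ vanishes below slice degree $n$, each bidegree receives only finitely many differentials; hence $RE_\infty=0$ will follow as soon as we know that, in each bidegree, only finitely many pages carry a nonzero differential \emph{out} of it (so that the towers $\{Z_r\}$ and $\{B_r\}$ are eventually constant bidegree by bidegree). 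I will check this in odd and in zero characteristic.

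In odd characteristic, \aref{thm:odd} says that $\alpha_1^{-1}\slice(\kk)$ collapses at $\alpha_1^{-1}\slice_2$. Then $E_r=\alpha_1^{-1}\slice_2$ for every $r\ge 2$, so the cycle and boundary towers are eventually constant globally, $RE_\infty=0$, and strong convergence follows; the identification of the target with $\pi_\star\eta^{-1}\sphere$ when $\cd\kk<\infty$ is then immediate from \aref{thm:etaSlices}.

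In characteristic $0$, I will invoke \aref{thm:profile}: $\alpha_1^{-1}\slice(\kk)$ is determined by the profile $\{r_k\}$ of $\alpha_1^{-1}\slice(\QQ)$, so every nonzero differential on a generator is a $d_{r_k}\alpha_{2^k+1}=\rho^{r_k}\alpha_{2^k}\alpha_1^{r_k+1}$ (for $r_k<\infty$), and the rest are Leibniz consequences over the permanent cycles in $k^M_*[\alpha_1^{\pm1}]$. Using the relations $\alpha_{2^k+1}\doteq\alpha_5^{2^{k-2}}$ and $\alpha_{2^k}\doteq\alpha_4\alpha_5^{2^{k-2}-1}$ (up to powers of the unit $\alpha_1$), the Leibniz rule computes $d_r$ on every $k^M_*[\alpha_1^{\pm1}]$-module generator $\alpha_5^j$ and $\alpha_4\alpha_5^j$ of $\alpha_1^{-1}\slice_2$: because $\alpha_4^2=0$, each $\alpha_4\alpha_5^j$ is a permanent cycle (its only potential differential lands in $\alpha_4^2$-multiples), so the $\alpha_4\alpha_5^j$-towers only \emph{receive} differentials, while the $\alpha_5^j$-towers only \emph{support} them. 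Moreover, by the binary-valuation argument in the proof of \aref{thm:Qprofile}, the tower on $\alpha_5^j$ supports exactly one nonzero differential, on page $r_{2+\nu_2(j)}$ when that integer is finite, and none otherwise. Consequently each bidegree of $\alpha_1^{-1}\slice(\kk)$ lies on at most one page carrying a nonzero differential into or out of it, so $\{Z_r\}$ and $\{B_r\}$ stabilize in every bidegree, $RE_\infty=0$, and $\alpha_1^{-1}\slice(\kk)$ converges strongly to $\pi_\star\sc(\eta^{-1}\sphere)$. As before, the last sentence follows from \aref{thm:etaSlices}.

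The main obstacle is the characteristic-$0$ bookkeeping: one must ensure that the Leibniz rule, applied to the $\alpha_1$-periodic (hence vertically infinite) $k^M_*$-towers, cannot feed a single bidegree with differentials coming from infinitely many pages. The device that prevents this is the relation $\alpha_4^2=0$, which splits the generators cleanly into ``sources'' ($\alpha_5^j$) and ``sinks'' ($\alpha_4\alpha_5^j$), together with the fact that each $\alpha_5^j$-tower is exhausted by its unique differential on page $r_{2+\nu_2(j)}$. The only slightly delicate point I anticipate is verifying that a class $\alpha_5^j$ which is a $d_{<r}$-cycle survives as a nonzero class on the page $E_r$ where it first supports a differential; this holds because every boundary in $\alpha_1^{-1}\slice(\kk)$ lies in an $\alpha_4$-tower, so no $\alpha_5^j$ is hit.
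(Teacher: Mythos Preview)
Your proposal is correct and follows essentially the same approach as the paper: reduce to Boardman's criterion $RE_\infty=0$ using the conditional convergence from \aref{thm:etaSlices}, handle odd characteristic via the collapse in \aref{thm:odd}, and in characteristic $0$ invoke \aref{thm:profile} to see that each bidegree supports only finitely many nonzero outgoing differentials. The paper's proof is terser---it simply observes that the profile structure immediately gives the requisite finiteness---whereas you spell out the source/sink dichotomy coming from $\alpha_4^2=0$ and the $2$-adic valuation bookkeeping; this extra detail is not needed, but it is correct.
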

\begin{proof}
We have already verified this result for odd characteristic fields and fields with finite cohomological dimension.  It remains to check characteristic $0$ fields of arbitrary cohomological dimension.  By \aref{thm:etaSlices}, we have weak convergence to $\sc(\eta^{-1}\sphere)$, so it suffices to check vanishing of Boardman's $RE_\infty$ term (for $E=\alpha_1^{-1}\slice(\kk)$).  By \cite[Remark after Theorem 7.1]{boardman}, it in turn suffices to check that for each tri-degree $(s,m+n\alpha)$ there are at most finitely many nonzero differentials $d_r:\alpha_1^{-1}\slice_r^{s,m+n\alpha}\to \alpha_1^{-1}\slice_r^{s+r,m-1+n\alpha}$.  By \aref{thm:profile}, $\alpha_1^{-1}\slice(\kk)$ has profile $\{r_k\}$ where $\{r_k\}$ is the profile of $\alpha_1^{-1}\slice(\QQ)$.  In particular, the finiteness condition on nonzero differentials is met, and we may conclude that we indeed have strong convergence.
\end{proof}

\begin{theorem}\label{thm:groups}
Suppose $\kk$ is a field of characteristic $0$ which has profile $\{r_k\}$.  Let $\nu_2$ denote $2$-adic valuation.  If $r_k<\infty$ for all $\kk$, then
\[
  \pi_m \sc(\eta^{-1}\sphere) \cong
  \begin{cases}
    W(\kk)&\text{if }m=0\text{;}\\
    W(\kk)/2^{r_k}&\text{if }m>0\text{, }m=4\ell-1\text{, and }k=\nu_2(4\ell)\text{;}\\
    {}_{2^{r_k}}W(\kk)&\text{if }m>0\text{, }m=4\ell\text{, and }k=\nu_2(4\ell)\text{;}\\
    0 &\text{otherwise}.
  \end{cases}
\]
If $\{r_k\}$ eventually takes the value $\infty$ with first instance $r_K=\infty$, then
\[
  \pi_m \sc(\eta^{-1}\sphere) \cong
  \begin{cases}
    W(\kk)&\text{if }m=0\text{;}\\
    W(\kk)/2^{r_k}&\text{if }m>0\text{, }m=4\ell-1\text{, and }k=\nu_2(4\ell)<K\text{;}\\
    {}_{2^{r_k}}W(\kk)&\text{if }m>0\text{, }m=4\ell\text{, and }k=\nu_2(4\ell)<K\text{;}\\
    W(\kk)&\text{if }m>0\text{, }m=4\ell-1\text{ or }4\ell\text{, and }\nu_2(4\ell)\ge K\text{;}\\
    0 &\text{otherwise}.
  \end{cases}
\]
\end{theorem}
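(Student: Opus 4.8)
The plan is to read off $\pi_\star\sc(\eta^{-1}\sphere)$ from the $E_\infty$-page of $\alpha_1^{-1}\slice(\kk)$ and then resolve the resulting extension problems against the known structure of $\pi_0$. By \aref{thm:conv}, $\alpha_1^{-1}\slice(\kk)$ converges strongly to $\pi_\star\sc(\eta^{-1}\sphere)$, so for each $(m,n)$ the slice filtration on $\pi_{m+n\alpha}\sc(\eta^{-1}\sphere)$ is exhaustive, Hausdorff, and complete, with associated graded $\alpha_1^{-1}\slice_\infty$. Since $\alpha_1$ is inverted, multiplication by $\eta$ is a filtered isomorphism $\pi_{m+n\alpha}\cong\pi_{m+(n+1)\alpha}$, so it suffices to work in weight $0$ and to write $\pi_m$ for the common value.

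First I would record $\alpha_1^{-1}\slice_\infty$. By \aref{lemma:d2} and \aref{thm:d1} the $E_3$-page is $k^M_*[\alpha_1^{\pm1},\alpha_4,\alpha_5]/\alpha_4^2$, and by \aref{thm:profile} the only nonzero differentials are $d_{r_k}\alpha_{2^k+1}=\rho^{r_k}\alpha_{2^k}\alpha_1^{r_k+1}$ (when $r_k<\infty$) and their Leibniz consequences, with $\rho^{r_k}$ now read in $k^M_*(\kk)$; the inductive bookkeeping in the proof of \aref{thm:Qprofile} then applies verbatim. Organizing the page by simplicial degree in weight $0$ yields: the $m=0$ column survives untouched as $k^M_*[\alpha_1^{\pm1}]$; for $m=4\ell-1>0$ with $k=\nu_2(4\ell)$ and $r_k<\infty$, the column is the cokernel $(k^M_*/\rho^{r_k})[\alpha_1^{\pm1}]$ of the incoming $d_{r_k}$ and supports no further differential; for $m=4\ell>0$ with $k=\nu_2(4\ell)$ and $r_k<\infty$, the column is the kernel ${}_{\rho^{r_k}}k^M_*[\alpha_1^{\pm1}]$ of the outgoing $d_{r_k}$ and (as in the proof of \aref{thm:Qprofile}) has no further target; for those $\ell$ with $r_k=\infty$ both the $m=4\ell-1$ and $m=4\ell$ columns survive untouched as $k^M_*[\alpha_1^{\pm1}]$; and all remaining columns, together with everything in negative stems, vanish. (When $\rho^{r_k}=0$ in $k^M_*(\kk)$ the relevant differential is trivial and the column survives fully, consistently with the final formulas, since then $2^{r_k}W(\kk)=0$.)

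Next I would resolve the extensions. As recalled before \aref{prop:main}, Morel's theorem identifies $\pi_0\sc(\eta^{-1}\sphere)$ with $W(\kk)$, and under this identification the slice filtration is the fundamental-ideal filtration, with $\operatorname{gr}$ identified with $k^M_*$ via \cite{OVV} so that $2$ acts on $\operatorname{gr}$ as $\rho=[-1]$. Each $\pi_m$ with $m>0$ is a filtered $\pi_0$-module whose associated graded, by multiplicativity of the slice filtration, is the $\operatorname{gr}$-module found above. For $m=4\ell-1$ with $r_k<\infty$ this is $k^M_*/\rho^{r_k}$, cyclic over $k^M_*$ on the bottom class; lifting that class to $\sigma\in\pi_{4\ell-1}$ and using exhaustiveness, Hausdorffness, and completeness of the filtration shows $\pi_{4\ell-1}=W(\kk)\cdot\sigma$, and the correspondence $2\leftrightarrow\rho$ gives $\operatorname{gr}(\Ann\sigma)=\rho^{r_k}k^M_*=\operatorname{gr}(2^{r_k}W(\kk))$, whence $\Ann\sigma=2^{r_k}W(\kk)$ and $\pi_{4\ell-1}\cong W(\kk)/2^{r_k}$. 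For $m=4\ell$ with $r_k<\infty$, the associated graded ${}_{\rho^{r_k}}k^M_*$ agrees with $\operatorname{gr}({}_{2^{r_k}}W(\kk))$ (again via \cite{OVV}), and there is no room for hidden extensions, exactly as in the proof of \aref{prop:main}, so $\pi_{4\ell}\cong{}_{2^{r_k}}W(\kk)$. When $r_k=\infty$ the associated graded is free of rank one over $k^M_*$, giving $\pi_{4\ell-1}\cong\pi_{4\ell}\cong W(\kk)$; finally $\pi_0\cong W(\kk)$ and $\pi_m=0$ in the remaining degrees. Assembling these according to whether $\{r_k\}$ is everywhere finite or eventually $\infty$ produces the two displayed formulas.

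The main obstacle is the extension step: one must show the filtered $W(\kk)$-modules $\pi_m$ are genuinely cyclic (respectively the full $2^{r_k}$-torsion subgroup) rather than merely carrying the expected associated graded, and this rests on completeness of the slice filtration from \aref{thm:conv}, the translation $2\leftrightarrow\rho$ coming from \cite{OVV}, and multiplicativity of the slice filtration. A subsidiary point requiring care is the $m=0$ entry: $\pi_0\sc(\eta^{-1}\sphere)$ is $W(\kk)$ when $\cd\kk<\infty$ (slice completion being inert on $\eta^{-1}\sphere$ there), and in general is the $I$-adic completion of $W(\kk)$, which one should read into the statement when $\cd\kk=\infty$.
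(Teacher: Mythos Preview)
Your proposal is correct and follows the same approach as the paper: the paper's proof is the one-line ``This all follows from the slice filtration being the $I$-adic filtration, $\rho$ representing $2$ in $W(\kk)$, the structure of the differentials in \aref{thm:profile}, and strong convergence in \aref{thm:conv},'' and you have simply unpacked each of these ingredients with appropriate care. Your closing caveat about $\pi_0\sc(\eta^{-1}\sphere)$ for fields of infinite cohomological dimension is a fair observation about the statement itself rather than a defect in the argument.
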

\begin{proof}
This all follows from the slice filtration being the $I$-adic filtration, $\rho$ representing $2$ in $W(\kk)$, the structure of the differentials in \aref{thm:profile}, and strong convergence in \aref{thm:conv}.
\end{proof}

\begin{theorem}\label{thm:main}
Suppose that $\kk$ is not of characteristic $2$ and that $-1$ is a sum of four squares in $\kk$. Then, as a ring,
\[
  \pi_\star \sc(\eta^{-1}\sphere) \cong W(\kk)[\eta^{\pm 1},\sigma,\mu]/(\sigma^2)
\]
where $|\eta|=\alpha$, $|\sigma|=3+4\alpha$, and $|\mu| = 4+5\alpha$.  If additionally $\cd \kk<\infty$, then $\sc(\eta^{-1}\sphere)\simeq \eta^{-1}\sphere$ and this is a computation of the $\eta$-periodic homotopy groups of the motivic sphere spectrum.
\end{theorem}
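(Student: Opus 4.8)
The plan is to split on the characteristic of $\kk$. If $\kk$ has odd characteristic, then the asserted ring isomorphism (together with the identification $\sc(\eta^{-1}\sphere)\simeq\eta^{-1}\sphere$ when $\cd\kk<\infty$) is exactly \aref{prop:main} and the four-squares hypothesis plays no role, so the real content is the characteristic $0$ case. There the strategy is to use the hypothesis to force the $\alpha_1$-periodic slice spectral sequence to collapse and then to run the filtration and extension argument already used for \aref{prop:main}, which is where the only genuine care is needed.

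First I would translate the hypothesis into $k^M_*$-language: $-1$ being a sum of four squares is equivalent to $\rho^3=0$ in $k^M_*(\kk)$. Indeed, if $-1=a^2+b^2+c^2+d^2$, then $(1,a,b,c,d,0,0,0)$ exhibits the $3$-fold Pfister form $\langle\langle-1,-1,-1\rangle\rangle\cong\langle1\rangle^{\oplus8}$ as isotropic, hence hyperbolic, so the symbol $\rho^3=\{-1,-1,-1\}$ dies in $k^M_3(\kk)$ (by \cite{OVV}, or already by the classical splitting of $e_3\colon k^M_3\to I^3/I^4$). In particular $\rho^r=\rho^{r-3}\cdot\rho^3=0$ for every $r\ge 3$.

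Next I would feed this into \aref{thm:profile}: for a characteristic $0$ field $\kk$, the spectral sequence $\alpha_1^{-1}\slice(\kk)$ is determined by the profile $\{r_k\}$ of $\alpha_1^{-1}\slice(\QQ)$, meaning every differential is a Leibniz consequence of the differentials $d_{r_k}\alpha_{2^k+1}=\rho^{r_k}\alpha_{2^k}\alpha_1^{r_k+1}$, each with $r_k\ge 3$. Since $\rho^{r_k}$ vanishes in $k^M_*(\kk)$, all of these differentials are zero over $\kk$, so $\alpha_1^{-1}\slice(\kk)$ collapses at $\alpha_1^{-1}\slice_2\cong k^M_*(\kk)[\alpha_1^{\pm1},\alpha_4,\alpha_5]/(\alpha_4^2)$. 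By \aref{thm:conv} this spectral sequence converges strongly to $\pi_\star\sc(\eta^{-1}\sphere)$, so the slice filtration on $\pi_\star\sc(\eta^{-1}\sphere)$ has associated graded $k^M_*(\kk)[\alpha_1^{\pm1},\alpha_4,\alpha_5]/(\alpha_4^2)$.

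Finally I would reassemble this graded ring, in parallel with the discussion preceding \aref{prop:main}. On $\pi_0$, Morel's theorem gives $\pi_0\eta^{-1}\sphere\cong W(\kk)$, and the slice filtration on $\pi_0\sc(\eta^{-1}\sphere)$ coincides with the fundamental-ideal filtration on $W(\kk)$: this is an isomorphism for fields of finite cohomological dimension by \aref{cor:fdconv} and multiplicativity of the slice filtration, and every field in which $-1$ is a sum of four squares is a filtered colimit of finitely generated subfields over $\QQ$, each of which is non-real and hence of finite $\cd$, and over which $-1$ remains a sum of four squares; so the identification descends to $\kk$ since all the constructions commute with such colimits. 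Then $I^q/I^{q+1}\cong k^M_q$ by \cite{OVV}, so the $\alpha_1$-periodic family of copies of $k^M_*$ reassembles into $W(\kk)[\eta^{\pm1}]$; setting $\sigma:=\alpha_4$ in degree $3+4\alpha$ and $\mu:=\alpha_5$ in degree $4+5\alpha$, the relation $\alpha_4^2=0$ on $E_\infty$ lifts to $\sigma^2=0$ because $\sigma^2$ sits in bidegree $6+8\alpha$, whose simplicial degree is $\equiv 2\pmod 4$ and where the homotopy groups vanish, leaving no room for a hidden extension (and the same sparsity rules out any other hidden extension). This gives the ring isomorphism, and the last sentence is \aref{thm:etaSlices}: $\sc(\eta^{-1}\sphere)\simeq\eta^{-1}\sphere$ as soon as $\cd\kk<\infty$. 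The main obstacle here is not a new difficulty—\aref{thm:profile} and \aref{thm:conv} carry the weight—but rather the bookkeeping in this last step for fields of possibly infinite cohomological dimension, dispatched by the colimit-of-finite-$\cd$-fields device.
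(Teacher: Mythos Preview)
Your proof is correct and follows essentially the same route as the paper: reduce to characteristic $0$ via \aref{prop:main}, translate the four-squares hypothesis into $\rho^3=0$, invoke \aref{thm:profile} to see that all differentials (which are multiplication by $\rho^{r_k}$ with $r_k\ge 3$) vanish so $\alpha_1^{-1}\slice(\kk)$ collapses at $E_2$, and then use \aref{thm:conv} together with the $I$-adic filtration argument preceding \aref{prop:main} to reassemble the ring. Your write-up is more explicit than the paper's in the reassembly step (the paper just writes ``By \aref{thm:conv}, this proves the theorem''), and your colimit-of-finite-$\cd$-subfields device to justify the filtration identification for fields of infinite $\cd$ is a nice touch that the paper leaves implicit.
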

\begin{proof}
By \aref{prop:main}, we may assume without loss of generality that $\chr \kk=0$ and that $-1$ is a sum of four squares in $\kk$.  It is standard that the latter condition is equivalent to $\rho^3=0\in k^M_3(\kk)$ (see \cite[Corollary X.6.20]{lam:intro}).  By \aref{thm:profile}, we see that the spectral sequence collapses (regardless of the profile of $\QQ$).  By \aref{thm:conv}, this proves the theorem.
\end{proof}

\begin{corollary}\label{cor:nonperiodic}
If $\cd \kk<\infty$ and $n\ge \max\{3m+5,4m\}$, then $\pi_{m+n\alpha}\sphere\cong \pi_{m+n\alpha}\eta^{-1}\sphere$; if, additionally, $\kk$ is not of characteristic $2$ and $-1$ is a sum of four squares in $\kk$, then these groups are $0$ or $W(\kk)$ according to whether $m\equiv 1,2\pmod{4}$ or $m\equiv 0,3\pmod{4}$, respectively.
\end{corollary}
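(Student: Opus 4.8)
The corollary has two parts, and the second is a formal consequence of the first together with \aref{thm:main}, so the work lies in the first: that the unit map $\sphere\to\eta^{-1}\sphere$ induces an isomorphism $\pi_{m+n\alpha}\sphere\xrightarrow{\cong}\pi_{m+n\alpha}\eta^{-1}\sphere$ whenever $n\ge\max\{3m+5,4m\}$. Granting this and assuming moreover that $\chr\kk\ne 2$ and that $-1$ is a sum of four squares in $\kk$, note that $\cd\kk<\infty$ forces $\sc(\eta^{-1}\sphere)\simeq\eta^{-1}\sphere$, so \aref{thm:main} gives $\pi_\star\eta^{-1}\sphere\cong W(\kk)[\eta^{\pm 1},\sigma,\mu]/(\sigma^2)$ with $|\eta|=\alpha$, $|\sigma|=3+4\alpha$, $|\mu|=4+5\alpha$. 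A monomial $\eta^{a}\sigma^{b}\mu^{c}$ with $a\in\ZZ$, $b\in\{0,1\}$ and $c\ge 0$ has bidegree $(3b+4c)+(a+4b+5c)\alpha$, and for a fixed $m$ the equation $3b+4c=m$ has a solution precisely when $m\ge 0$ and $m\equiv 0$ or $3\pmod 4$; in that case $(b,c)$ is unique and $a$ remains free, so $\pi_{m+n\alpha}\eta^{-1}\sphere\cong W(\kk)$, and $\pi_{m+n\alpha}\eta^{-1}\sphere=0$ otherwise. In particular the group is $0$ for $m\equiv 1,2\pmod 4$ and $W(\kk)$ for $m\equiv 0,3\pmod 4$ (with $m\ge 0$), as asserted.

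For the first part I would invoke \cite[Theorem 5.5]{ORO:vanishing}, or reprove it by comparing slice spectral sequences. The unit map induces a morphism from $\slice(\kk)$, the slice spectral sequence of $\sphere$ (which converges conditionally to $\pi_\star\sphere$ by \aref{thm:sliceCond} since $\cd\kk<\infty$), to the slice spectral sequence of $\eta^{-1}\sphere$, which by \aref{thm:etaSlices} is $\alpha_1^{-1}\slice(\kk)$ and converges strongly to $\pi_\star\eta^{-1}\sphere$ by \aref{cor:fdconv}. One checks that this morphism is an isomorphism on $E_1$-pages in every bidegree contributing to $\pi_{m+n\alpha}$ once $n\ge\max\{3m+5,4m\}$, and that the family of such bidegrees is closed under the relevant differentials; the conclusion then holds on $E_\infty$-pages and passes to homotopy by convergence. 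For the $E_1$-comparison one uses \aref{thm:slices}:
\[
  \pi_{m+n\alpha}s_q\sphere\cong\bigoplus_{s\ge 0}\pi_{(m-q+s)+(n-q)\alpha}H\Ext_{MU_*MU}^{s,2q},
\]
and likewise for $\eta^{-1}\sphere$ with $\Ext_{MU_*MU}$ replaced by $\alpha_1^{-1}\Ext_{MU_*MU}$. Since $\pi_{a+b\alpha}HA$ vanishes unless $0\le a\le -b$, the contributing pairs $(q,s)$ all satisfy $q\ge n$ and $\max(0,q-m)\le s\le 2q-m-n$, and for such pairs the localization map $\Ext_{MU_*MU}^{s,2q}\to\alpha_1^{-1}\Ext_{MU_*MU}^{s,2q}$ is an isomorphism in the stated range, by \cite[Corollary 6.2.3]{AM} together with the location of the $\alpha$-family and the extent of the $\alpha_1$-periodic cone in the Adams--Novikov $E_2$-page.

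The one genuinely delicate point is this numerology: one must confirm that $n\ge\max\{3m+5,4m\}$ suffices for every summand $\Sigma^{q-s+q\alpha}H\Ext_{MU_*MU}^{s,2q}$ of $s_q\sphere$ reaching $\pi_{m+n\alpha}$ to lie in the $\alpha_1$-periodic range of $\Ext_{MU_*MU}$. I expect the two terms of the maximum to track the two $\FF_2[\alpha_1^{\pm 1}]$-generating families of $\alpha_1^{-1}\slice_\infty$ in \aref{thm:d1}: the $4m$ term controlling the $\mu$-tower ($m\equiv 0\pmod 4$, coming from $\alpha_3^2\alpha_1^{-1}$) and the $3m+5$ term the tower of $\sigma$ times powers of $\mu$ ($m\equiv 3\pmod 4$). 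Since \cite[Theorem 5.5]{ORO:vanishing} already pins down precisely this periodicity range, in practice I would simply cite it and reduce the corollary to the monomial count of the first paragraph.
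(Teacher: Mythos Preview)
Your proposal is correct and matches the paper's own argument: the paper's proof simply cites \cite[Theorem 5.5]{ORO:vanishing} for the first assertion and lets the second follow from \aref{thm:main}, exactly as you suggest in your final sentence. Your explicit monomial count and the sketched slice-spectral-sequence comparison are more detailed than what the paper records, but the underlying route is the same.
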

\begin{proof}
All but the final statement was already observed in \cite[Theorem 5.5]{ORO:vanishing}.
\end{proof}

We certainly do \emph{not} expect that the $\alpha_1$-periodic slice spectral sequence collapses at its $E_2$ page in general.  Indeed, inspired by the computations of $\pi_\star\eta^{-1}\sphere\comp{2}$ by Guillou-Isaksen \cite{GI:etaR} over $\RR$ and Wilson \cite{Wilson} over $\QQ$, we make the following conjecture.

\begin{conjecture}\label{conj}
The $\alpha_1$-periodic slice spectral sequence over $\QQ$ has profile $\{3,4,5,\ldots\}$\emph{i.e.}, $r_k=k+1$ for all $k$.
\end{conjecture}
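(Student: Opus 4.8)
The plan is to prove that $\eta^{-1}\sphere$ over $\QQ$ carries precisely the classical $2$-primary image-of-$J$ pattern, now with coefficients in $W(\QQ)$, and to read the profile off from that identification. The arithmetic heart of the statement is that the predicted differential length $r_k=k+1$ is exactly the $2$-adic valuation $\nu_2\bigl(3^{2^{k-1}}-1\bigr)$; indeed, lifting the exponent gives $\nu_2\bigl(3^{2^{k-1}}-1\bigr)=\nu_2(3-1)+\nu_2(3+1)+(k-1)-1=k+1$. This is the valuation that governs the action of the Adams operation $\psi^3$ on a $2^{k-1}$-st power of the Bott-type class underlying $\alpha_3$ (note that $\alpha_{2^k+1}$ agrees up to a unit with $\alpha_3^{2^{k-1}}\alpha_1^{1-2^{k-1}}$), and it coincides with the order $2^{\nu_2(\ell)+3}$ of the classical image of $J$ in $\pi^{\mathrm s}_{4\ell-1}$ once one sets $4\ell=2^k$. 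There are two natural routes to making this rigorous: a concrete one through the existing $\eta$-periodic computations, and a conceptual one through a connective Witt-theoretic $J$-spectrum.

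For the concrete route, I would first invoke \aref{thm:profile} to replace $\QQ$ by a field over which $\pi_\star\eta^{-1}\sphere\comp{2}$ has already been determined \emph{and} over which the classes $\rho^{r_k}$ are nonzero --- either $\RR$ (Guillou--Isaksen \cite{GI:etaR}) or $\QQ$ itself (Wilson \cite{Wilson}); note $\QQ_p$ is useless because there $\rho^3=0$, which makes the profile collapse vacuously. Over $\RR$ one has $W(\RR)=\ZZ$ and $k^M_*(\RR)=\FF_2[\rho]$, so by \aref{thm:groups} the profile is pinned down by the orders $\lvert\pi_{4\ell-1}\sc(\eta^{-1}\sphere)\rvert=2^{r_{\nu_2(4\ell)}}$; Guillou--Isaksen's computation identifies these as the $2$-primary image-of-$J$ groups, i.e.\ $\ZZ/2^{\nu_2(\ell)+3}$, which forces $r_k=k+1$ upon taking $4\ell=2^k$. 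The hard part will be matching the abutment: a formally real field has $\cd_2\kk=\infty$, so \aref{thm:etaSlices} does not give $\sc(\eta^{-1}\sphere)\simeq(\eta^{-1}\sphere)\comp{2}$, and one must verify that the slice tower of $\eta^{-1}\sphere$ (whose strata above the bottom are wedges of $H\FF_2$-modules, with bottom stratum an $H\ZZ$-extension) and the $H\FF_2$-Adams tower have the same completion on $\pi_m$ for $m\ge 0$ --- equivalently, produce a map from $\alpha_1^{-1}\slice(\RR)$ to the $\RR$-motivic Adams spectral sequence converging to $\eta^{-1}\sphere$ that detects the differentials at issue.

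For the conceptual route --- which is also the reason to expect the profile $\{3,4,5,\dots\}$ --- I would construct a connective Witt-theoretic $J$-spectrum $\jw:=\hofib\bigl(\psi^3-1\colon\kw\to\kw'\bigr)$, mimicking the classical $j=\hofib(\psi^3-1\colon \mathrm{ko}\to\mathrm{bspin})$, using stable Adams operations on $\KQ$ (hence on $\KW=\eta^{-1}\KQ$) and a suitable Morel-connective cover $\kw'$ for the target. Since $\psi^3$ fixes the unit, the unit map of \aref{thm:kw_slice} lifts to $\eta^{-1}\sphere\to\jw$; the $\lim^1$-obstructions to this lift can be controlled by the cellular filtration of $\kw$ over $\QQ$ from \cite{roendigs:etainv}, exactly as in the proof of \aref{thm:sigma_infty}. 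Granting this, $\pi_\star\jw$ over $\QQ$ follows from the fibre sequence and the known $\pi_\star\kw$ (Balmer Witt groups): on the degree-$4\ell$ Bott class $\psi^3-1$ acts by $3^{2\ell}-1$, of $2$-adic valuation $\nu_2(\ell)+3$, reproducing the profile. The real obstacle here is proving that $\eta^{-1}\sphere\to\jw$ is an equivalence after $2$-completion over $\QQ$ --- this is the content of \aref{conj:jw} --- which I expect would require an $\eta$-periodic refinement of the Adams--Novikov comparison, matching $\alpha_1^{-1}\Ext_{MU_*MU}$ against the $\jw$-based $E_2$-page, since the slice-theoretic input (\aref{thm:etaSlices}) does not by itself constrain the higher differentials.
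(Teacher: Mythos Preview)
The statement you are attempting to prove is labeled a \emph{Conjecture} in the paper and carries no proof there; the paper offers only heuristic motivation and speculation about how a proof might go. Your proposal accurately reconstructs that speculation: the paper motivates the profile $\{3,4,5,\ldots\}$ via the Guillou--Isaksen and Wilson computations of $\pi_\star\eta^{-1}\sphere\comp{2}$ over $\RR$ and $\QQ$, and then discusses the connective Witt-theoretic $J$-spectrum $\jw_3$ and \aref{conj:jw} as the conceptual explanation. The paper further notes that work in progress of Bachmann and Hopkins would establish \aref{conj:jw}, and that a comparison of slice spectral sequences would then deduce \aref{conj}.

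You correctly locate the genuine gaps in both routes. In the concrete route, the obstruction is exactly what you name: over a formally real field $\cd_2\kk=\infty$, so \aref{thm:conv} only gives convergence to $\pi_\star\sc(\eta^{-1}\sphere)$, while the Adams-type computations of \cite{GI:etaR} and \cite{Wilson} target $\pi_\star(\eta^{-1}\sphere)\comp{2}$; the paper provides no identification of these two objects, and without it you cannot read the profile off from \aref{thm:groups}. In the conceptual route, the obstruction is precisely \aref{conj:jw}, which the paper also leaves open. Your arithmetic check that $\nu_2(3^{2^{k-1}}-1)=k+1$ is correct and is the expected mechanism, but it only becomes a proof once one knows $\psi^3$ acts on $\pi_\star\kw\comp{2}$ in the anticipated way and that $\eta^{-1}\sphere\comp{2}\to\jw_3$ is an equivalence---both of which the paper explicitly flags as unestablished.

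In short: your proposal is not a proof but an honest and accurate survey of the two natural strategies, and it coincides with the paper's own discussion surrounding the conjecture. Neither the paper nor your outline closes the gap.
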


If \aref{conj} holds, then over $\kk$ of characteristic $0$,
\[
  \pi_m \sc(\eta^{-1}\sphere)\cong
  \begin{cases}
    W(\kk)&\text{if }m=0\text{,}\\
    W(\kk)/2^{\nu_2(4\ell)+1}&\text{if }m=4\ell-1>0\text{,}\\
    {}_{2^{\nu_2(4\ell)+1}}W(\kk)&\text{if }m=4\ell>0\text{,}\\
    0&\text{otherwise.}
  \end{cases}
\]
Curiously, this makes it appear as if $\sc(\eta^{-1}\sphere)$ might fit into a ``connective image of $J$ fiber sequence'' of the form $\sc(\eta^{-1}\sphere)\to \kw\to\Sigma^4\kw$ where $\kw$ is the connective cover of the $2$-complete Witt $K$-theory spectrum.  Over $\kk=\CC$, one may show that the cone on the map $\sigma_\infty:\Sigma^3\kw\to \eta^{-1}\sphere$ of \aref{thm:sigma_infty} coincides with $\kw$. In fact, the composition $\Sigma^3\kw \to \eta^{-1}\sphere\to \kw$ is zero, as one may deduce inductively, starting with the triviality of 
\[\Sigma^3\eta^{-1}\sphere \to \Sigma^3\kw\to\eta^{-1}\sphere \to \kw\]
and continuing along the cell presentation of $\kw$ given in the proof
of \aref{thm:sigma_infty}. Hence there is an induced map from the cone 
of $\Sigma^3\kw \to \eta^{-1}\sphere$ to $\kw$. This map induces an isomorphism on homotopy groups, hence is an equivalence by cellularity. In particular, one may express $\eta^{-1}\sphere$ over the complex numbers as the fiber of a map $\kw\to \Sigma^4\kw$. 

The Adams operations on the $2$-complete algebraic $K$-theory spectrum $\KGL\comp{2}$ consitute an action of $\ZZ_2^\times$, the units in the $2$-adic integers.  When $\kk$ has finite virtual cohomological dimension, the results of \cite{HKO:holim} imply that $\KQ\comp{2}\simeq (\KGL\comp{2})^{h\{\pm 1\}}$ inherits an action of $\ZZ_2^\times/\{\pm 1\}\cong \ZZ_2$ by Adams operations.  Inverting $\eta$ and taking the connective cover results in Adams operations on $\kw\comp{2}$.  For any such $\psi^g$, the difference of ring maps $\psi^g-1:\kw\comp{2}\to\kw\comp{2}$ lifts to a map $\psi^g-1:\kw\comp{2}\to\Sigma^4\kw\comp{2}$.  (This can be seen by observing that $\Sigma^4\kw\comp{2}$ is the $4$-connective cover of $\kw\comp{2}$ and the cofiber of $\Sigma^4\kw\comp{2}\to\kw\comp{2}$ is the Eilenberg-MacLane spectrum associated with the homotopy module $\ul W\comp{2}[\eta^{\pm 1}]$.)

The $\eta$-periodic unit $\eta^{-1}\sphere\comp{2}\to \kw\comp{2}$ factors through the fiber $\jw_g$ of $\psi^g-1:\kw\comp{2}\to\Sigma^4\kw$ because $\psi^g-1$ is a difference of ring maps.  This leads to the following conjecture, which is similar in spirit to Mahowald's presentation of the $v_1$-periodic sphere in topology.  

\begin{conjecture}\label{conj:jw}
The map $\eta^{-1}\sphere\comp{2}\to \jw_3$ induced by the $\eta$-periodic unit $\eta^{-1}\sphere\comp{2}\to\kw\comp{2}$ is an equivalence.
\end{conjecture}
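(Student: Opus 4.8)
The plan is to reduce to a homotopy-group statement and then compute both sides. Both $\eta^{-1}\sphere\comp{2}$ and $\jw_{3}$ are cellular $2$-complete motivic spectra — the former by construction, the latter because it is the fiber of a self-map of $\kw\comp{2}$, which is cellular — so it suffices to check that the comparison map is an isomorphism on bigraded homotopy groups; as $\eta$ acts invertibly on both sides, it is enough to treat $\pi_{m}$ for all $m\in\ZZ$ in a single weight.

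\textbf{The target.} Applying $\pi_{\ast}$ to the defining fiber sequence $\jw_{3}\to\kw\comp{2}\xrightarrow{\psi^{3}-1}\Sigma^{4}\kw\comp{2}$ and using that $\pi_{m}\kw\comp{2}\cong W(\kk)\comp{2}$ for $m\ge 0$ with $m\equiv 0\pmod 4$ and vanishes otherwise (the slice filtration being the $I$-adic filtration, as in \aref{prop:main}), one is reduced to understanding $\psi^{3}-1$ on the copies of $W(\kk)\comp{2}$. Now $\psi^{3}$ is the identity on the Witt sheaf $\underline{W}$ — every unit is a square after an odd Adams operation — and multiplies the degree-$4$ Bott periodicity class by $9$, so $\psi^{3}-1$ acts on $\pi_{4\ell}\kw\comp{2}$ by a $2$-adic unit times $9^{\ell}-1$, and $\nu_{2}(9^{\ell}-1)=3+\nu_{2}(\ell)=\nu_{2}(4\ell)+1$. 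The long exact sequence then yields
\[
  \pi_{m}\jw_{3}\cong
  \begin{cases}
    W(\kk)\comp{2} & m=0,\\
    W(\kk)\comp{2}/2^{\nu_{2}(m+1)+1} & m\equiv 3\pmod 4,\ m>0,\\
    {}_{2^{\nu_{2}(m)+1}}W(\kk)\comp{2} & m\equiv 0\pmod 4,\ m>0,\\
    0 & \text{otherwise,}
  \end{cases}
\]
which is exactly the value of $\pi_{m}\sc(\eta^{-1}\sphere)$ predicted after \aref{conj}.

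\textbf{The source.} Rather than compute $\pi_{\star}\eta^{-1}\sphere\comp{2}$ directly (which amounts to \aref{conj}), I would globalize the description available over $\CC$. Over $\CC$ the discussion following \aref{conj:jw} produces a cofiber sequence $\Sigma^{3}\kw\comp{2}\xrightarrow{\sigma_{\infty}}\eta^{-1}\sphere\comp{2}\to\kw\comp{2}\xrightarrow{\partial}\Sigma^{4}\kw\comp{2}$ whose middle map is the $\eta$-periodic unit. If one can show that (i) such a cofiber sequence exists over an arbitrary field of characteristic $\ne 2$, and (ii) the classifying map $\partial$ agrees, compatibly with the canonical nullhomotopy of $(\psi^{3}-1)\circ(\text{unit})$, with $\psi^{3}-1$, then $\eta^{-1}\sphere\comp{2}\simeq\hofib(\psi^{3}-1)=\jw_{3}$ and the induced equivalence is the map of \aref{conj:jw}. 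For (i), the inductive construction in the proof of \aref{thm:sigma_infty} requires only the vanishing of $\pi_{4n+2}\eta^{-1}\sphere\comp{2}$ together with a generator statement for $\pi_{4n}\eta^{-1}\sphere\comp{2}$; over $\QQ$ the first is unconditional, since \aref{thm:Qprofile} produces a profile and \aref{thm:groups}, \aref{thm:conv} then force $\pi_{m}\sc(\eta^{-1}\sphere)=0$ for $m\not\equiv 0,3\pmod 4$ \emph{whatever the profile is}. One would then need the $\kw$-cell resolution of \cite{roendigs:etainv} over $\QQ$ and a $\QQ$-form of the second statement; base change along $\Spec\kk\to\Spec\QQ$ spreads the conclusion to all characteristic-$0$ fields, and a prime-field/residue-field argument in the spirit of \aref{thm:odd} covers odd characteristic. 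Statement (ii) is where the Adams operations on $\kw\comp{2}$ coming from \cite{HKO:holim} and Bott periodicity enter: one must match the attaching maps $a_{n},c_{n}$ of the $\kw$-resolution — which, by the target computation above, are governed by the integers $9^{\ell}-1$ — with the difference $\psi^{3}-1$.

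\textbf{Main obstacle.} The crux is (ii): identifying the attaching map of $\cone(\sigma_{\infty})$ with $\psi^{3}-1$. This is the motivic counterpart of Mahowald's presentation of the $v_{1}$-periodic sphere via the image of $J$, and, as there, it demands control of the secondary (attaching) data of the $\kw$-resolution rather than just its homotopy — precisely the ``good description of $\sc(\eta^{-1}\sphere)$'' whose absence the paper highlights. A subsidiary difficulty is the $\QQ$-analogue of the Andrews--Miller input used over $\CC$, which is entangled with \aref{conj}; and, as a minor point, one must verify that $\eta^{-1}\sphere\comp{2}$ is itself slice-complete over fields of infinite cohomological dimension such as $\QQ$, so that the $\alpha_{1}$-periodic slice computations apply to it directly.
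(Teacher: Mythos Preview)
The statement is a \emph{conjecture}; the paper does not prove it, and your proposal is, as you yourself flag, a strategy with an explicit gap at step~(ii). So there is no paper's proof to compare against, only the heuristic recorded in the paragraph immediately following \aref{conj:jw}.

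That heuristic takes a genuinely different route from yours. The paper's suggested approach (attributed to work in progress of Bachmann--Hopkins) is to smash the comparison map with $HW$ and argue that $HW\comp{2}\to HW\wedge \jw_3$ is an equivalence; since $\eta^{-1}\sphere$ is $HW$-complete, the original map is then forced to be an equivalence. This sidesteps exactly the circularity you identify: one never needs $\pi_\star\eta^{-1}\sphere\comp{2}$ as input, and one never has to match the boundary $\partial$ of your cofiber sequence with $\psi^3-1$. The price is understanding the $\psi^3$-action on $HW\wedge\kw\comp{2}$ rather than on $\kw\comp{2}$ itself. On that last point, note that your target computation already assumes $(\psi^3-1)(\beta^k)=(9^k-1)\beta^k$ on $\pi_\star\kw\comp{2}$, which the paper explicitly marks as only ``presumably'' true --- a second gap, independent of~(ii), and one the paper says would yield \aref{conj} via a slice comparison rather than \aref{conj:jw} directly. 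Finally, your opening ``compute both sides'' paragraph would not by itself close the argument even if both computations were in hand: abstractly isomorphic homotopy groups do not show that the \emph{specified} map is an isomorphism. All the real content of your proposal therefore lives in the cofiber-sequence construction and step~(ii), where you have correctly located the obstruction; the paper's $HW$-completion argument is designed precisely to route around it.
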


Work in progress of Tom Bachmann and Mike Hopkins suggests that the action of $\psi^3$ on $HW\wedge \kw\comp{2}$ is such that the unit map smashed with $HW\comp{2}$ induces an equivalence $HW\comp{2}\to HW\wedge \jw_3$.  Since $\eta^{-1}\sphere$ is $HW$-complete, this would immediately prove that $\eta^{-1}\sphere\comp{2}\simeq \jw_3$.  It is presumably also the case that $(\psi^3-1)(\beta^k) = (9^k-1)\beta^k$ on $\pi_\star \kw\comp{2}$, in which case a comparison of slice spectral sequences would prove \aref{conj}.

\begin{remark}
The equivalence $\eta^{-1}\sphere\comp{2}\simeq \jw_3$ would also lead to a complete determination of the homotopy type and groups of $\eta^{-1}\sphere$.  Let $X_\kk$ denote the Harrison space of orderings of $\kk$.  Then $\pi_\star \{\eta^{-1},1/2\}\sphere \cong H^0(X_\kk;\pi_m^{\top}\sphere[1/2])$, which can be seen by the results of \cite{bachmann:rho}, a descent spectral sequence, and the fact that $X_\kk$ is a Stone space.  The 2-primary arithmetic fracture square would then imply that
\[
  \pi_m \eta^{-1}\sphere\cong
  \begin{cases}
    W(\kk)&\text{if }m=0\text{,}\\
    H^0(X_k;\pi_m^\top\sphere[1/2])&\text{if }m>0\text{ and }m\equiv 1\text{ or }2\pmod{4}\text{,}\\
    W(\kk)/2^{\nu_2(4\ell)+1}\oplus H^0(X_k;\pi_m^\top\sphere[1/2])&\text{if }m=4\ell-1>0\text{,}\\
    {}_{2^{\nu_2(4\ell)+1}}W(\kk)\oplus H^0(X_k;\pi_m^\top\sphere[1/2])&\text{if }m=4\ell>0\text{,}\\
    0&\text{otherwise.}
  \end{cases}
\]
\end{remark}

\bibliographystyle{plain}
\bibliography{etaPeriodic}

\end{document}